\documentclass[conference]{IEEEtran}
\IEEEoverridecommandlockouts

\usepackage{cite}
\usepackage{amsmath}
\usepackage{amsthm}
\usepackage[left=48pt, right=48pt, top=57pt, bottom=57pt]{geometry}
\usepackage{amssymb,amsfonts}
\usepackage{mathtools}
\usepackage{algorithmic}
\usepackage{graphicx}
\usepackage{textcomp}
\usepackage{caption}
\usepackage{subcaption}
\newtheorem{theorem}{Theorem}
\newtheorem{lemma}{Lemma}
\newtheorem{problem}{Problem}
\newtheorem{definition}{Definition}
\newtheorem{remark}{Remark}
\usepackage{cleveref}
\crefname{lemma}{Lemma}{Lemmas}
\crefname{problem}{Problem}{Problems}
\crefname{definition}{Definition}{def}
\crefname{remark}{Remark}{rem}
\usepackage{xcolor}
\def\BibTeX{{\rm B\kern-.05em{\sc i\kern-.025em b}\kern-.08em
    T\kern-.1667em\lower.7ex\hbox{E}\kern-.125emX}}

\begin{document}

\title{Robust Controller Synthesis under Markovian Mode Switching with Periodic LTV Dynamics \\

\thanks{The authors are with the Department of AAE, Purdue University, West Lafayette, IN 47906 USA. (email: shriva15@purdue.edu, koguri@purdue.edu)}
}
\author{\IEEEauthorblockN{Shaurya Shrivastava}
\and
\IEEEauthorblockN{Kenshiro Oguri}
}
\maketitle
\begin{abstract}

In this work, we propose novel LMI-based controller synthesis frameworks for periodically time-varying Markov-jump linear systems. We first discuss the necessary conditions for mean square stability and derive Lyapunov-like conditions for stability assurance. To relax strict stability requirements, we introduce a new criterion that doesn't require the Lyapunov function to decrease at each time step. Further, we incorporate these stability theorems in LMI-based controller synthesis frameworks while considering two separate problems: minimizing a quadratic cost, and maximizing the region of attraction. Numerical simulations verify the controllers' stability and showcase its applicability to fault-tolerant control.
\end{abstract}
\begin{IEEEkeywords}
LMIs, Markov Jump Linear Systems, Periodic systems, Lyapunov stability
\end{IEEEkeywords}

\section{Introduction} 


Markov Jump Linear Systems (MJLS) are an important kind of stochastic hybrid systems with applications in fault-tolerant control \cite{yang_fault-tolerant_2018,li_robust_2019} network control \cite{su_event-triggered_2021}, and macroeconomic models \cite{do_val_receding_1999}. In particular, they provide a powerful framework to design robust controllers with favorable properties such as stability or optimality even in the presence of partial/full actuator failures. MJLS are characterized by switching dynamics where the switching depends on an independent parameter, which forms a Markov chain. Therefore, the system dynamics comprise a set of modes, and the active mode at the time determines the dynamics. An excellent overview of the classical results in discrete-time MJLS literature can be found in \cite{costa_discrete-time_2005}. 

The stability characteristics of such systems are not trivial extensions of the classical linear systems case, and several non-intuitive findings arise. MJLS stability has been extensively studied in the literature \cite{costa_stability_1993} and the well-known Ricatti equations have also been extended to the MJLS framework \cite{costa_mean-square_1996}. To aid in controller synthesis, Linear Matrix Inequality(LMI)-based control design was proposed in \cite{el_ghaoui_robust_1996}. However, most of these results do not extend to time-varying dynamics and thus have limited practical applicability.


The wide applications of linear time-varying (LTV) systems has motivated several studies focused on their stability. For a single-mode LTV system, the Lyapunov stability criterion of finite decrease at every time step has been difficult to satisfy in several applications \cite{zhou_asymptotic_2017}. Therefore, considerable efforts have been made to relax this criterion while still establishing stability. Stable scalar functions have been utilized for discrete-time LTV systems in \cite{zhou_asymptotic_2017} and non-monotone Lyapunov functions have been studied in \cite{ahmadi_non-monotonic_2008}. 

Periodic LTV systems form a subclass of LTV systems and also serve as a bridge between LTI and LTV systems. \cite{bittanti_periodic_2008} summarizes some of the classical results on periodic system stability. For linear periodic systems, the Periodic Lyapunov Lemma (PLL) \cite{bittanti_extended_1985} utilizes time-varying quadratic Lyapunov functions to prove asymptotic stability. The PLL has also been used for LMI-based controller synthesis \cite{de_souza_lmi_2000} which allows for state and control constraints in the controller design process. Lyapunov-like conditions have also been derived for periodic nonlinear systems \cite{jiang_converse_2002,mazenc_strict_2003} which require finite decrease of a periodic Lyapunov function at every time step to guarantee stability. \cite{bohm_stability_2012} leveraged the periodic property to use a non-monotonic Lyapunov function which decreases over one period to guarantee stability. It was modified to formulate an equivalent convex control synthesis problem in \cite{athanasopoulos_stability_2014}. 

Although periodic LTV systems with Markov jumps have been studied before \cite{ma_h2_2013,dragan_stochastic_2005,hou_spectral_2016}, they have not dealt with linear quadratic control or explored any relaxations in the control synthesis framework. Therefore, in this work, we generalize the rich studies on periodic LTV system stability to periodically time-varying MJLS. The contributions of this work are threefold. First, we propose a new Lyapunov-like stability criterion for periodically time-varying MJLS in \Cref{thm4}. Second, we relax \Cref{thm4} by using the periodic property of the system in \Cref{thm_rel}. Third, we formulate LMI-based controller synthesis frameworks to design stable control laws for periodically time-varying MJLS while obeying convex state and control constraints.


\subsection{Notation:}$\mathbb{R}$, $\mathbb{N}$ and $\mathbb{Z}_a^b$ denote the set of real numbers, natural numbers and set of integers from $a$ to $b$, respectively. We define a linear space $\mathbb{M}^N$ with $N\in\mathbb{N}$ comprising a finite set of matrices $V\triangleq (V(1),\ldots V(N))$, where each $V(i)\in\mathbb{R}^{n\times n} $, $n\in\mathbb{N} \; \forall i\in\mathbb{Z}_1^N$.  $\{ V_r \}_{r\in\mathbb{N}}$ is a sequence of such finite set of matrices, and $V_k\in\mathbb{M}^N$ refers to the $k^{\mathrm{th}}$ element in the sequence. Therefore, $V_k(i)$ refers to the $i^{\mathrm{th}}$ matrix from the $k^{\mathrm{th}}$ element of the sequence. Similarly, $\mathcal{H}_k^i(\cdot)$ refers to the $i^{\mathrm{th}}$ operator from the $k^{\mathrm{th}}$ element of the sequence $\{\mathcal{H}_r(\cdot)\}_{r\in\mathbb{N}}$ whose each element is a finite set of operators $\mathcal{H}(\cdot)\triangleq(\mathcal{H}^1(\cdot),\ldots\mathcal{H}^N(\cdot))$.
 $\mathcal{H}^\star(\cdot)$ denotes the adjoint form of the operator $\mathcal{H}(\cdot)$, i.e. $\langle\mathcal{H}(X),Y\rangle=\langle X,\mathcal{H}^\star(Y)\rangle$, where $\langle\cdot,\cdot\rangle$ denotes the inner product. For a matrix $X$, $X^\top$, $\|X\|$, tr($X$) refers to its transpose, maximum singular value, and trace, respectively. For a set of matrices $V\in\mathbb{M}^N$, $\mathrm{blkdiag}\{V(i)\}$ denotes a block diagonal matrices comprising each matrix $V(i)$ of the finite set. For an operator  $\mathcal{F}$, $\sigma_m(\mathcal{F})$ indicates its spectral radius, given by $\sup_{\|X\|=1} \|\mathcal{F}(X)\|$. If an operator is defined for a finite set of matrices $V(i)\in\mathbb{M}^N$, it represents an operator on the matrix $\mathrm{blkdiag}(V(i))$. For a vector $x\in\mathbb{R}^n$, $\|x\|$ indicates the $l_2$-norm of the vector. $A\otimes B$ denotes the Kronecker product between matrices $A$ and $B$. $\ast$ is used to denote symmetrical elements in a hermitian matrix. $I_n$ denotes the identity matrix of size $n\times n$. A real-valued function $h(x):\mathbb{R}_+\to \mathbb{R}_+$ belongs to class-$\kappa$ if it is continuous, strictly increasing, and $h(0)=0$. If, in addition, $\lim_{t\rightarrow\infty}h(t)=\infty$, then it belongs to class $\kappa_\infty$.
\section{Background}\label{background}
\subsection{Periodic System Stability}
Before we discuss the classical results on periodic system stability, we first define a $T$-periodic matrix as a matrix obeying the following property:
\begin{equation}\label{periodicdef}
    P_k=P_{T+k}  \quad\quad\quad\forall k \in \mathbb{N}
\end{equation}
We now focus on periodic LTV systems of the form:
\begin{equation}\label{LTV}
    x_{k+1}=A_k x_k + B_k u_k
\end{equation}
where $A_k$ and $B_k$ are $T$-periodic matrices, $x_k \in \mathbb{R}^{n_x}$ is the state and $u_k \in \mathbb{R}^{n_u}$ is the control action at time $k$. The following theorem, called the PLL, establishes a Lyapunov stability criterion for such systems.
\begin{theorem}[Theorem 3 in \cite{bolzern_periodic_1988} and Proposition 3.5 in \cite{bittanti_periodic_2008}]\label{periodicLyapunovlemma}
  The LTV periodic system \eqref{LTV} is asymptotically stable using a linear feedback control $u_k=K_k x_k$ iff $\exists$  a $T$-periodic matrix $P_k\succ0$ such that 
\begin{equation}
    P_k - (A_k + B_k K_k)^\top P_{k+1} (A_k + B_k K_k) \succ 0, \forall k \in \mathbb{Z}^{T-1}_0
\end{equation}
\end{theorem}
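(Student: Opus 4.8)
The plan is to pass to the closed-loop dynamics $x_{k+1}=\Phi_k x_k$ with $\Phi_k\triangleq A_k+B_kK_k$, which is again $T$-periodic, and to argue the two implications separately through the closed-loop transition matrix $\Psi(k,j)\triangleq\Phi_{k-1}\Phi_{k-2}\cdots\Phi_j$ (with $\Psi(j,j)=I$) and the monodromy matrix $M_k\triangleq\Psi(k+T,k)$. The organizing fact I would invoke is the classical equivalence, for periodic systems, between asymptotic stability and Schur stability of the monodromy matrix: since $M_k$ shares its characteristic multipliers with $M_0$ for every $k$, asymptotic stability forces all multipliers strictly inside the unit disk, which I will translate into a geometric bound $\|\Psi(k+i,k)\|\le C\rho^{i}$ with $\rho<1$.

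For sufficiency ($\Leftarrow$), suppose such a $T$-periodic $P_k\succ0$ exists and take the time-varying Lyapunov candidate $V_k(x)=x^\top P_k x$. Evaluating the increment along trajectories gives
$$V_{k+1}(x_{k+1})-V_k(x_k)=-x_k^\top\bigl(P_k-\Phi_k^\top P_{k+1}\Phi_k\bigr)x_k,$$
which is strictly negative for $x_k\neq0$ by hypothesis. Because $P_k$ is $T$-periodic it takes only finitely many values, so there exist $0<\alpha\le\beta$ with $\alpha I\preceq P_k\preceq\beta I$ for all $k$; these uniform bounds promote the strict decrease of $V_k$ into asymptotic stability of the origin by the standard discrete-time time-varying Lyapunov argument. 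The decrease hypothesis, stated only on $\mathbb{Z}_0^{T-1}$, extends to all $k$ by periodicity.

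For necessity ($\Rightarrow$), I would construct the certificate explicitly as the infinite series
$$P_k\triangleq\sum_{i=0}^{\infty}\Psi(k+i,k)^\top\Psi(k+i,k).$$
The geometric bound above guarantees convergence, and the $i=0$ term equals $I$, so $P_k\succeq I\succ0$. Periodicity of $P_k$ follows from $\Psi(k+T+i,k+T)=\Psi(k+i,k)$, itself a consequence of the $T$-periodicity of $\Phi$. Finally, using $\Psi(k+i,k)=\Psi(k+i,k+1)\Phi_k$ and reindexing, I expect a telescoping identity $\Phi_k^\top P_{k+1}\Phi_k=P_k-I$, whence $P_k-\Phi_k^\top P_{k+1}\Phi_k=I\succ0$ exactly, which certifies the inequality.

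The main obstacle is the necessity direction, specifically justifying convergence of the series, which hinges on the (nonobvious) equivalence between asymptotic stability of the periodic closed loop and geometric decay of $\Psi(k+i,k)$. This requires decomposing $i=nT+r$, identifying $\Psi(k+nT,k)=M_k^{n}$, bounding the $n$-dependence via Schur stability of the monodromy matrix and the $r$-dependence via the finitely many intra-period products; by contrast, the sufficiency direction is comparatively routine once the uniform spectral bounds on $P_k$ are observed.
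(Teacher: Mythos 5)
Your proof is correct, but note that the paper never proves \Cref{periodicLyapunovlemma} itself: it is quoted as a known background result, and the paper's own proofs start with its Markov-jump generalization, \Cref{thm4}. Measured against that proof, your necessity direction is essentially the deterministic specialization of the paper's argument: the paper defines $P_k(i)$ in \eqref{Pkdef} as a conditional expectation of exactly your series $\sum_{t\ge 0}\Phi(k+t,k)^\top\Phi(k+t,k)$, proves convergence by the same decomposition into blocks of length $T$ governed by the Schur-stable one-period matrix, and lands on the same exact identity $P_k-\Phi_k^\top P_{k+1}\Phi_k=I$ (cf.\ \eqref{thm4finaleq}); your sketch of the geometric bound $\|\Psi(k+i,k)\|\le C\rho^{i}$ via $i=nT+r$ and the shared multipliers of the monodromy matrices is the standard, correct way to close the convergence gap you flagged. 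Where you genuinely differ is sufficiency: you run the classical pointwise Lyapunov-decrease argument, using the fact that a $T$-periodic $P_k$ takes only finitely many values to obtain uniform bounds $\alpha I\preceq P_k\preceq\beta I$ (and, implicitly, a uniform decrease margin $\gamma>0$, which you should state since strictness at finitely many $k$ is what supplies it). The paper instead iterates the inequality over one period, as in \eqref{thm3p1}, concludes that the one-period operator $\mathcal{F}_T$ is contractive, and passes through adjoints to the propagation of the covariance $\mathcal{X}_k$. Your route is more elementary and fully self-contained in the deterministic case; the paper's contractivity route is the one that scales to the stochastic setting, where the object that propagates is the mode-indexed second moment rather than a single trajectory, and pointwise decrease along sample paths would not directly yield mean-square stability.
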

While most works have focused on one-step decay of Lyapunov functions, some authors have also utilized the periodicity of the system to design less strict conditions which do not necessarily require the monotonic decrease of the Lyapunov function at every time step. Instead, they must decrease at one period, gives as $V_{k+T}(x_{k+T})-V_k(x_k)\leq0 \;\forall x,k\in\mathbb{Z}_0^{T-1}$.
\subsection{Markov Jump Linear Systems}
We consider a discrete-time MJLS defined on the probability space $(\Omega,\mathcal{M},\mathbb{P})$ given by:
\begin{equation}\label{MJLS}
    x_{k+1} = A_k(\omega_k)x_k + B_k(\omega_k)u_k
\end{equation}
We denote by $\mathcal{M}_k$ a sub-sigma algebra which represents the mode information until time $k$. The parameter $\omega_k$ corresponds to the active mode at time $k$, and it can take values from the set $\Omega = \{1,2,\cdots N\}$. The switching between these modes is governed by a homogeneous Markov chain with transition probability $p_{ij}$ where $p_{ij} = \mathbb{P}(r_{k+1}=j|r_k=i)$. We also define the transition probability matrix $\mathcal{P}$ whose ($i$,$j$)th entry is $p_{ij}$.

We focus on mode-dependent linear feedback controllers:
\begin{equation}\label{linearfeedback}
    u_k=K_k(i)x_k
\end{equation}
For such control laws, we can represent the dynamics propagation as $x_{k+1}=\phi_{k}(i)x_k$, where $\phi_k(i)=A_k(i)+B_k(i)K_k(i)$.
We now define some of the operators which are commonly used in the MJLS literature. For any set of matrices $V\in\mathbb{M}^N$,
\begin{equation}\label{operators}
    \begin{aligned}        \mathcal{E}^i(V)&\triangleq\sum_{j\in\Omega}p_{ij}V(j)\\
    \mathcal{T}^j_k(V) &\triangleq\sum_{i\in\Omega} p_{ij} \phi_k(i)V(i)\phi_k^\top(i)\\
    \mathcal{L}^i_k(V) &\triangleq \phi_k^\top(i)\mathcal{E}^i(V)\phi_k(i)
    \end{aligned}
\end{equation}
\begin{lemma}[Proposition 3.2 in \cite{costa_discrete-time_2005}]\label{operatoradjoint}
    ${\mathcal{T}^j_k}^\star(V)$ = $\mathcal{L}^i_k(V)$.
\end{lemma}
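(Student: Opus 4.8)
The plan is to verify the adjoint relation directly from its defining property $\langle \mathcal{T}^j_k(V),W\rangle=\langle V,{\mathcal{T}^j_k}^\star(W)\rangle$ using the standard trace inner product on $\mathbb{M}^N$, namely $\langle U,W\rangle=\sum_{i\in\Omega}\mathrm{tr}\big(U(i)W(i)\big)$ (the relevant Lyapunov-type matrices being symmetric, so transposes may be dropped). Since each operator acts on the whole finite set and returns an indexed component, I would treat $\mathcal{T}_k$ and $\mathcal{L}_k$ as maps $\mathbb{M}^N\to\mathbb{M}^N$ whose $j$th and $i$th components are $\mathcal{T}^j_k$ and $\mathcal{L}^i_k$, and establish $\langle \mathcal{T}_k(V),W\rangle=\langle V,\mathcal{L}_k(W)\rangle$ for all $V,W\in\mathbb{M}^N$.

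First I would expand the left-hand side using the definition of $\mathcal{T}^j_k$ in \eqref{operators}:
\[
\langle \mathcal{T}_k(V),W\rangle=\sum_{j\in\Omega}\mathrm{tr}\Big(\sum_{i\in\Omega}p_{ij}\,\phi_k(i)V(i)\phi_k^\top(i)\,W(j)\Big).
\]
The key step is then to apply the cyclic invariance of the trace to each summand, moving $V(i)$ to the front:
\[
\mathrm{tr}\big(\phi_k(i)V(i)\phi_k^\top(i)W(j)\big)=\mathrm{tr}\big(V(i)\,\phi_k^\top(i)W(j)\phi_k(i)\big).
\]
After this rearrangement I would interchange the two finite summations over $i$ and $j$, factor $V(i)$ and the fixed matrices $\phi_k(i)$ outside the sum over $j$, and recognize the inner sum $\sum_{j\in\Omega}p_{ij}W(j)$ as exactly $\mathcal{E}^i(W)$. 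This collapses the expression into $\sum_{i\in\Omega}\mathrm{tr}\big(V(i)\,\phi_k^\top(i)\mathcal{E}^i(W)\phi_k(i)\big)=\sum_{i\in\Omega}\mathrm{tr}\big(V(i)\,\mathcal{L}^i_k(W)\big)=\langle V,\mathcal{L}_k(W)\rangle$, which is the claimed identity.

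I do not expect a genuine obstacle here: the result is a direct computation, and the only things to handle carefully are the index bookkeeping---keeping straight the roles of $i$ (the ``from'' mode indexing $V$) and $j$ (the ``to'' mode indexing $W$) when swapping the order of summation---and the transpose conventions in the inner product, so that the cyclic trace manipulation is applied to the correct factors. Once the summation order is swapped and the definition of $\mathcal{E}^i$ is matched, the identification with $\mathcal{L}^i_k$ is immediate.
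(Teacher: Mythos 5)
The paper offers no proof of this lemma at all---it is quoted directly as Proposition 3.2 of \cite{costa_discrete-time_2005}---and your trace-inner-product computation is correct and is essentially the standard argument given in that reference. In particular, your reading of the loosely indexed statement as the operator identity $\langle\mathcal{T}_k(V),W\rangle=\langle V,\mathcal{L}_k(W)\rangle$ on $\mathbb{M}^N$ is the right interpretation, and the cyclic-trace and sum-interchange steps, together with recognizing $\sum_{j\in\Omega}p_{ij}W(j)=\mathcal{E}^i(W)$, close the argument with no gaps.
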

\begin{lemma}[Proposition 7.2.1 in \cite{kesavan_functional_2023}]\label{op_adj}
    If an operator is contractive, so is its adjoint.
\end{lemma}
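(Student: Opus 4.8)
The plan is to prove the stronger fact that an operator and its adjoint share the \emph{same} induced norm, i.e.\ $\sigma_m(\mathcal{F}^\star)=\sigma_m(\mathcal{F})$; the lemma then follows at once, since contractivity is simply the statement $\sigma_m(\mathcal{F})<1$ (or $\le 1$). Because the operators in question act on the finite-dimensional inner-product space of block matrices, the adjoint $\mathcal{F}^\star$ always exists and is uniquely determined by the defining relation $\langle\mathcal{F}(X),Y\rangle=\langle X,\mathcal{F}^\star(Y)\rangle$, and every supremum appearing in the definition of $\sigma_m$ is attained.

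First I would establish the bound $\sigma_m(\mathcal{F}^\star)\le\sigma_m(\mathcal{F})$. Fix any $Y$ with $\mathcal{F}^\star(Y)\neq 0$ and expand
\begin{equation}
\|\mathcal{F}^\star(Y)\|^2=\langle\mathcal{F}^\star(Y),\mathcal{F}^\star(Y)\rangle=\langle\mathcal{F}(\mathcal{F}^\star(Y)),Y\rangle,
\end{equation}
where the last equality is the adjoint identity. Applying Cauchy--Schwarz and then the definition of $\sigma_m$ gives
\begin{equation}
\|\mathcal{F}^\star(Y)\|^2\le\|\mathcal{F}(\mathcal{F}^\star(Y))\|\,\|Y\|\le\sigma_m(\mathcal{F})\,\|\mathcal{F}^\star(Y)\|\,\|Y\|.
\end{equation}
Dividing by $\|\mathcal{F}^\star(Y)\|>0$ yields $\|\mathcal{F}^\star(Y)\|\le\sigma_m(\mathcal{F})\,\|Y\|$, a bound that holds trivially when $\mathcal{F}^\star(Y)=0$; taking the supremum over $\|Y\|=1$ gives $\sigma_m(\mathcal{F}^\star)\le\sigma_m(\mathcal{F})$.

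The reverse inequality then comes for free by symmetry. Since taking adjoints is an involution, $(\mathcal{F}^\star)^\star=\mathcal{F}$, so applying the bound just proved to $\mathcal{F}^\star$ in place of $\mathcal{F}$ gives $\sigma_m(\mathcal{F})=\sigma_m\!\big((\mathcal{F}^\star)^\star\big)\le\sigma_m(\mathcal{F}^\star)$. Combining the two inequalities yields $\sigma_m(\mathcal{F}^\star)=\sigma_m(\mathcal{F})$, whence contractivity of $\mathcal{F}$ transfers directly to $\mathcal{F}^\star$.

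There is no genuine obstacle here beyond careful bookkeeping. The one point deserving attention is pinning down the intended meaning of \emph{contractive} (the strict $\sigma_m<1$ versus the non-expansive $\sigma_m\le 1$), but since the argument establishes \emph{equality} of the two norms, it delivers the conclusion under either convention. The only degenerate case, $\mathcal{F}^\star(Y)=0$, is dispatched separately above, and finite-dimensionality removes any concern about existence of the adjoint or attainment of the suprema.
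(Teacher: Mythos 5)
The paper gives no proof of this lemma at all---it is invoked by citation to Proposition 7.2.1 of the referenced functional-analysis text---and your argument is precisely the standard proof of that cited result: $\sigma_m(\mathcal{F}^\star)=\sigma_m(\mathcal{F})$ via the adjoint identity plus Cauchy--Schwarz, with the reverse bound obtained from the involution $(\mathcal{F}^\star)^\star=\mathcal{F}$. Your proof is correct, with the one understanding (which you implicitly adopt) that $\|\cdot\|$ on $\mathbb{M}^N$ is the norm induced by the inner product defining the adjoint, i.e.\ the Frobenius-type norm, since that is what the steps $\|\mathcal{F}^\star(Y)\|^2=\langle\mathcal{F}^\star(Y),\mathcal{F}^\star(Y)\rangle$ and Cauchy--Schwarz require; this matches the Hilbert-space setting of the cited proposition, so the approach is essentially the same.
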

We also define a finite set of linear operators $\mathcal{T}_k:\mathbb{M}^N\rightarrow\mathbb{M}^N$ and $\mathcal{L}_k:\mathbb{M}^N\rightarrow\mathbb{M}^N$ below:
\begin{equation}\label{diagoperators}
    \begin{aligned}
        \mathcal{T}_k(V)&\triangleq(\mathcal{T}^1_k(V),\ldots\mathcal{T}^N_k(V))\\
        \mathcal{L}_k(V)&\triangleq(\mathcal{L}^1_k(V),\ldots\mathcal{L}^N_k(V))
    \end{aligned}
\end{equation}
To study the covariance of a MJLS system as defined in \eqref{MJLS}, we also define the following:
\begin{equation}
    \begin{aligned}
\mathcal{X}_k(i)\triangleq&\mathbb{E}\left[x_kx_k^\top\mathbf{1}_{\omega_k=i}\right], \quad\mathcal{X}_k\triangleq(\mathcal{X}_k(1),\ldots\mathcal{X}_k(N))\\
X_k\triangleq&\mathbb{E}\left[x_kx_k^\top\right]=\sum_{i\in\Omega}\mathcal{X}_k(i)\\
    \end{aligned}
\end{equation}
where $\mathbf{1}_{\omega_k=i}$ is the indicator function. 
It has been shown \cite{costa_discrete-time_2005} that the covariance propagation is given by:
\begin{equation}\label{covprop}
    \mathcal{X}_{k+1}=\mathcal{T}_k(\mathcal{X}_k)
\end{equation}
We also define the operator $\mathcal{G}_k$ and $\mathcal{F}_k$ as follows:
\begin{equation}\label{gkdef}
    \begin{aligned}
        \mathcal{G}_k &\triangleq \mathcal{T}_{k-1}\circ\mathcal{T}_{k-2}\cdots\mathcal{T}_0\\
        \mathcal{F}_k &\triangleq \mathcal{L}_{0}\circ\mathcal{L}_{1}\cdots\mathcal{L}_{k-1}
    \end{aligned}
\end{equation}
\begin{definition}
    (Mean Square Stability) A MJLS is called Mean Square Stable (MSS) iff the following statement holds:
\begin{equation}
    \lim_{k\rightarrow\infty}\mathbb{E}\left[||x_k||^2\right] = 0
\end{equation}
\end{definition}
\subsection{Stability of Time-invariant MJLS}
For time-invariant MJLS, i.e. $\phi_k(\omega_k)=\phi(\omega_k)$, the criteria for MSS is given in \Cref{MSSLTI}:
\begin{theorem}[Theorem 3.9 in \cite{costa_discrete-time_2005}]\label{MSSLTI}
     A time-invariant MJLS is MSS iff $\|\mathcal{A}\|<1$, where $\mathcal{A}=(\mathcal{P}\otimes I_{n_x^2}) \mathrm{blkdiag}(\phi(i)\otimes \phi(i))$. 
\end{theorem}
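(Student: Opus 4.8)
The plan is to connect mean square stability to the spectral radius of a single matrix operator by "stacking" the covariance components $\mathcal{X}_k(i)$ into one big vector and writing the propagation \eqref{covprop} as a linear recursion. The key observation is that the definition of MSS, $\lim_{k\to\infty}\mathbb{E}[\|x_k\|^2]=0$, can be rewritten in terms of the trace of $X_k=\sum_i \mathcal{X}_k(i)$, since $\mathbb{E}[\|x_k\|^2]=\mathrm{tr}(X_k)$. So the whole problem reduces to understanding the asymptotic behavior of the sequence $\{\mathcal{X}_k\}$ under repeated application of the time-invariant operator $\mathcal{T}$, and the natural tool is to find the matrix representation of $\mathcal{T}$ and invoke the standard fact that a linear iteration converges to zero iff the spectral radius of its matrix is strictly less than one.

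First I would vectorize. Using the identity $\mathrm{vec}(\phi(i)\,\mathcal{X}(i)\,\phi^\top(i))=(\phi(i)\otimes\phi(i))\,\mathrm{vec}(\mathcal{X}(i))$, each term of $\mathcal{T}^j(\mathcal{X})=\sum_i p_{ij}\,\phi(i)\mathcal{X}(i)\phi^\top(i)$ becomes linear in the stacked vector $\hat{x}_k\triangleq(\mathrm{vec}(\mathcal{X}_k(1))^\top,\ldots,\mathrm{vec}(\mathcal{X}_k(N))^\top)^\top$. Collecting the coefficients, the map $\hat{x}_k\mapsto\hat{x}_{k+1}$ is exactly multiplication by $\mathcal{A}=(\mathcal{P}\otimes I_{n_x^2})\,\mathrm{blkdiag}(\phi(i)\otimes\phi(i))$, where the Kronecker factor $\mathcal{P}\otimes I_{n_x^2}$ implements the weighting-and-mixing by the transition probabilities $p_{ij}$ and the block-diagonal factor implements the per-mode conjugation $\phi(i)(\cdot)\phi^\top(i)$. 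I would verify carefully that the indices line up, i.e. that the $(i,j)$ block of $\mathcal{A}$ is $p_{ij}(\phi(i)\otimes\phi(i))$, matching the summation structure of $\mathcal{T}^j$.

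Once the recursion is written as $\hat{x}_{k+1}=\mathcal{A}\hat{x}_k$, the analysis is standard. Since $\|\mathcal{A}\|$ denotes the spectral radius here (per the notation section, where $\sigma_m$ is reserved for operators but $\|\cdot\|$ on the matrix operator $\mathcal{A}$ is its induced behavior), the sequence $\hat{x}_k=\mathcal{A}^k\hat{x}_0\to 0$ for every initial condition iff $\rho(\mathcal{A})<1$. The final step is to translate convergence of $\hat{x}_k$ back to convergence of $\mathbb{E}[\|x_k\|^2]$: because $\mathrm{tr}(X_k)=\sum_i\mathrm{tr}(\mathcal{X}_k(i))$ is a fixed linear functional of $\hat{x}_k$, and conversely each $\mathcal{X}_k(i)\succeq 0$ so that $\hat{x}_k\to 0$ is equivalent to $\mathrm{tr}(X_k)\to 0$ over all initial states, the two limits are equivalent.

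**The main obstacle** I anticipate is the careful bookkeeping in the forward direction, namely showing that $\hat{x}_k\to 0$ for \emph{all} admissible initial conditions (arising from genuine distributions of $x_0$ and $\omega_0$) is equivalent to $\rho(\mathcal{A})<1$, rather than merely sufficient. The subtlety is that the admissible $\hat{x}_0$ are constrained — each block $\mathcal{X}_0(i)$ must be positive semidefinite — so one cannot immediately conclude that convergence on this cone forces $\rho(\mathcal{A})<1$. The resolution is that $\mathcal{A}$ is a positive operator (it maps the cone of stacked PSD matrices into itself), so by Perron–Frobenius-type arguments its spectral radius is attained at an eigenvector lying in that cone; hence convergence to zero on all cone elements already forces $\rho(\mathcal{A})<1$. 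I would lean on this positivity structure, which is exactly the feature that makes the "iff" hold, to close the necessity direction cleanly.
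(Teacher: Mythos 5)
First, a framing point: the paper never proves this statement --- it is imported verbatim as background (Theorem 3.9 of \cite{costa_discrete-time_2005}), so there is no in-paper proof to compare against. Judged on its own merits, your outline is the standard textbook argument: vectorize the covariance recursion \eqref{covprop}, read off the matrix representation of $\mathcal{T}$, reduce MSS to convergence of a linear iteration via $\mathbb{E}[\|x_k\|^2]=\mathrm{tr}(X_k)$, and close the necessity direction by exploiting positivity of the operator on the cone of stacked PSD matrices. Those are the right ingredients (your charitable reading of $\|\mathcal{A}\|$ as spectral radius is also necessary --- under the paper's own notation of maximum singular value the statement would only be sufficient, not necessary). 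One minor refinement you gloss: the stacked PSD cone is solid only inside the invariant subspace of stacked \emph{symmetric} matrices, so the cone-preserving Perron--Frobenius theorem directly controls $\rho$ of the restriction; one must add the standard observation (e.g.\ via the Hermitian/complexification argument used in \cite{costa_discrete-time_2005}) that this equals the full spectral radius.

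The genuine gap is in the step you defer as bookkeeping: ``verify carefully that the indices line up.'' Done correctly, that verification \emph{fails} for the matrix $\mathcal{A}$ as stated. With the paper's convention $p_{ij}=\mathbb{P}(r_{k+1}=j\mid r_k=i)$, vectorizing $\mathcal{X}_{k+1}(j)=\sum_i p_{ij}\,\phi(i)\mathcal{X}_k(i)\phi(i)^\top$ puts the block $p_{ij}\,(\phi(i)\otimes\phi(i))$ in row-block $j$ (output mode) and column-block $i$ (input mode), so the propagation matrix is $(\mathcal{P}^{\top}\otimes I_{n_x^2})\,\mathrm{blkdiag}(\phi(i)\otimes\phi(i))$ --- note the transpose --- whereas your stated block structure ties the conjugation $\phi(i)\otimes\phi(i)$ to the row index and thus reproduces the untransposed form. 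This is not a cosmetic discrepancy that spectral-radius symmetry absorbs: swapping $\mathcal{P}$ for $\mathcal{P}^{\top}$ changes $\rho$ in general. Concretely, take $N=3$ with the deterministic cyclic chain $p_{12}=p_{23}=p_{31}=1$ and
$\phi(1)=\begin{bsmallmatrix}0.7&0\\0&0\end{bsmallmatrix}$, $\phi(2)=\begin{bsmallmatrix}1&1\\0&1\end{bsmallmatrix}$, $\phi(3)=\begin{bsmallmatrix}1&0\\1&1\end{bsmallmatrix}$. The system is MSS, since every monodromy product has $\rho\left(\phi(3)\phi(2)\phi(1)\right)=0.7<1$, yet for the statement's matrix one computes $\mathcal{A}^3=\mathrm{blkdiag}\left(\phi(2)\phi(3)\phi(1)\otimes\phi(2)\phi(3)\phi(1),\ldots\right)$ up to ordering, so $\rho(\mathcal{A})=\rho\left(\phi(2)\phi(3)\phi(1)\right)^{2/3}=1.4^{2/3}>1$. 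So a fully careful execution of your plan proves the theorem only for $\mathcal{A}=(\mathcal{P}^{\top}\otimes I_{n_x^2})\,\mathrm{blkdiag}(\phi(i)\otimes\phi(i))$ (the form actually in \cite{costa_discrete-time_2005}), and simultaneously shows that the statement as transcribed in the paper --- and your identification of the vectorized recursion with it --- contains a transpose error; the literal statement admits no correct proof.
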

\begin{remark}
    It is important to note that the stability of the individual modes does not directly imply MSS and vice-versa. For example, consider the following system:
\begin{equation*}
\begin{aligned}
        A_1 = \begin{bmatrix}
-0.5 & 2.0\\
                -0.5  &0.5
\end{bmatrix} &\; A_2 = \begin{bmatrix}
-0.5 & 0.1\\
                1  &0.3
\end{bmatrix}
\mathcal{P} = \begin{bmatrix}
0.6& 0.4\\
0.5&0.5
\end{bmatrix}
\end{aligned}
\end{equation*}
Here, $A_1$ and $A_2$ are stable, but the MJLS is not MSS.
\end{remark}
There also exists a Lyapunov-like condition to guarantee MSS of MJLS as given in \Cref{LyapunovLTI}:
\begin{lemma}[Theorem 3.19 in \cite{costa_discrete-time_2005}]\label{LyapunovLTI}
    A time-invariant MJLS is MSS iff $\exists$ a finite set of positive definite matrices $P = [P(1)\cdots P(N)] \in \mathbb{M}^N$ such that
\begin{equation}
    \begin{aligned}
        P - \mathcal{T}(P) \succ 0
    \end{aligned}
\end{equation}
where $\mathcal{T}(\cdot)$ is the time-invariant analogue of $\mathcal{T}_k$ defined in \eqref{diagoperators}. 
\end{lemma}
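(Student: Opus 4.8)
My plan is to route both directions through the equivalence between mean square stability and the contractivity of the covariance operator $\mathcal{T}$, exploiting that $\mathcal{T}$ is a \emph{positive} operator, i.e. it maps the cone of positive semidefinite tuples into itself and preserves the Löwner order. The starting point is the covariance recursion \eqref{covprop}, which unrolls to $\mathcal{X}_k=\mathcal{T}^k(\mathcal{X}_0)$ and hence $\mathbb{E}[\|x_k\|^2]=\sum_{i\in\Omega}\mathrm{tr}(\mathcal{X}_k(i))$. Every admissible initial condition produces a tuple $\mathcal{X}_0$ with each $\mathcal{X}_0(i)\succeq0$, and such tuples span the ambient space of symmetric tuples, so by linearity MSS is equivalent to $\mathcal{T}^k\to0$, that is, to $\sigma_m(\mathcal{T})<1$. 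This matches \Cref{MSSLTI}, since $\mathcal{A}$ is exactly the matrix representation of $\mathcal{T}$ under vectorization (using $\mathrm{vec}(\phi(i)V(i)\phi(i)^\top)=(\phi(i)\otimes\phi(i))\mathrm{vec}(V(i))$ together with the transition weights $p_{ij}$).

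For sufficiency, suppose $P\succ0$ satisfies $P-\mathcal{T}(P)\succ0$. First I would extract a uniform contraction factor: since each component of $P-\mathcal{T}(P)$ is positive definite and $P\succ0$, there is $\alpha\in(0,1)$ with $\mathcal{T}(P)\preceq\alpha P$ componentwise (take $\alpha=1-\min_i \lambda_{\min}(P(i)-\mathcal{T}^i(P))/\lambda_{\max}(P(i))$, capping at a value below one). Monotonicity of $\mathcal{T}$ then gives $\mathcal{T}^k(P)\preceq\alpha^k P$ by induction. Finally, for an arbitrary admissible $\mathcal{X}_0$ pick $c>0$ with $\mathcal{X}_0\preceq cP$, which is possible because $P\succ0$; monotonicity yields $\mathcal{X}_k=\mathcal{T}^k(\mathcal{X}_0)\preceq c\,\alpha^k P\to0$, hence $\mathbb{E}[\|x_k\|^2]\to0$ and the system is MSS.

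For necessity, MSS gives $\sigma_m(\mathcal{T})<1$, so the Neumann series $\sum_{k=0}^\infty\mathcal{T}^k$ converges. Fixing any $S\succ0$ (for instance $S=(I_{n_x},\ldots,I_{n_x})$) and defining $P\triangleq\sum_{k=0}^\infty\mathcal{T}^k(S)$, positivity of $\mathcal{T}$ makes every summand positive semidefinite, so $P\succeq S\succ0$; and by linearity and continuity $\mathcal{T}(P)=\sum_{k=1}^\infty\mathcal{T}^k(S)=P-S$, which is precisely $P-\mathcal{T}(P)=S\succ0$. This exhibits the required $P$ and closes the iff.

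I expect the main obstacle to be the first paragraph: passing rigorously from the probabilistic MSS definition to the purely operator-theoretic statement $\sigma_m(\mathcal{T})<1$. One must verify that ranging the initial state and initial mode distribution over all admissible values generates enough tuples $\mathcal{X}_0$ to force the \emph{operator} $\mathcal{T}^k$ (not merely its action along a single trajectory) to vanish, which hinges on linearity plus the fact that positive semidefinite tuples span the symmetric space. Once this equivalence and the positivity/monotonicity of $\mathcal{T}$ are in hand, the contraction-factor extraction, the dominated iteration, and the Neumann-series construction are all routine.
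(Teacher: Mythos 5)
The paper never proves this lemma: it is imported verbatim as Theorem 3.19 of \cite{costa_discrete-time_2005}, so there is no in-paper proof to compare against. Your argument is correct and self-contained, and it is worth measuring it against the closest proof the paper \emph{does} give, namely that of \Cref{thm4}, which generalizes this lemma to the periodic case. The paper works with the adjoint operator $\mathcal{L}$ rather than $\mathcal{T}$: sufficiency iterates the Lyapunov inequality to get contractivity of $\mathcal{F}_T$ and then invokes \Cref{operatoradjoint} and \Cref{op_adj} to transfer contractivity to the covariance side; necessity builds the certificate probabilistically, $P_k(i)=\mathbb{E}\left[\sum_{t}\Phi(k+t,k)^\top\Phi(k+t,k)\mid\omega_k=i\right]$, which is exactly the trajectory-level counterpart of your Neumann series. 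Your route is the natural one for the $\mathcal{T}$-form of the statement: since the covariance itself propagates by $\mathcal{T}$ via \eqref{covprop}, you never need the adjoint step, and $P=\sum_{k}\mathcal{T}^k(S)$ lands directly on $P-\mathcal{T}(P)=S\succ0$. The price is the spanning argument in your first paragraph (passing from MSS, a statement about the PSD cone, to contractivity of the operator), which the adjoint-based construction sidesteps; conversely, the paper's route needs two extra lemmas that yours does not.

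Two technical points you should tighten. First, over the reals, initial tuples of the form $\left(x_0x_0^\top\,\mathbb{P}(\omega_0=1),\ldots,x_0x_0^\top\,\mathbb{P}(\omega_0=N)\right)$ span only the \emph{symmetric} tuples, not all of $\mathbb{M}^N$; antisymmetric parts are never reached. This is harmless because the symmetric subspace is $\mathcal{T}$-invariant and both your contraction argument and your Neumann series live entirely inside it, but $\sigma_m(\mathcal{T})<1$ should be stated as the spectral radius of the restriction to that subspace (equivalently, work in the Hermitian setting, as Costa et al.\ do). Second, your parenthetical claim that $\mathcal{A}$ in \Cref{MSSLTI} is ``exactly'' the vectorized representation of $\mathcal{T}$ is off by a transpose: vectorizing $\mathcal{T}^j(V)=\sum_i p_{ij}\phi(i)V(i)\phi(i)^\top$ produces the weight pattern of $\mathcal{P}^\top\otimes I$, not $\mathcal{P}\otimes I$; the spectral radii agree (adjoint operators have equal spectral radius), but as stated the identification is inexact. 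Neither issue breaks the proof, since nothing in your core argument actually uses \Cref{MSSLTI}.
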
 
\subsection{Stability of Periodically Time-varying MJLS}
 The standard LTV MJLS \eqref{MJLS} is called LTV periodic MJLS (LTVPM) if each individual dynamics matrix $\phi_k(i)$ is $T$-periodic. Their stability criterion are given below:
 \begin{theorem}[Lemma 1 in \cite{hou_spectral_2016}] 
    A LTVPM is MSS iff the linear operator $\mathcal{G}_T$, as defined in \eqref{gkdef}, is contractive.
\end{theorem}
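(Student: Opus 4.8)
The plan is to reduce mean square stability to the vanishing of the iterated one-period covariance map and then invoke the standard finite-dimensional spectral criterion. First I would rewrite the second moment in terms of the covariance components. Because each $\mathcal{X}_k(i)\succeq0$,
\begin{equation*}
\mathbb{E}\left[\|x_k\|^2\right]=\operatorname{tr}(X_k)=\sum_{i\in\Omega}\operatorname{tr}\big(\mathcal{X}_k(i)\big).
\end{equation*}
Since the trace restricted to the positive semidefinite cone is a norm, $\mathbb{E}[\|x_k\|^2]\to0$ holds if and only if $\mathcal{X}_k\to0$ in $\mathbb{M}^N$. Thus MSS is equivalent to the convergence $\mathcal{X}_k\to0$ for every admissible initial covariance $\mathcal{X}_0$.

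Next I would iterate the covariance recursion \eqref{covprop}. The LTVPM assumption makes each $\phi_k(i)$, and hence each operator $\mathcal{T}_k$, $T$-periodic, so $\mathcal{T}_{k+T}=\mathcal{T}_k$. Composing \eqref{covprop} over one full period gives precisely $\mathcal{G}_T$ from \eqref{gkdef}, and periodicity then yields, for all $m\in\mathbb{N}$,
\begin{equation*}
\mathcal{X}_{mT}=\mathcal{G}_T^{m}(\mathcal{X}_0).
\end{equation*}
I would then argue that the full sequence $\mathcal{X}_k$ converges to $0$ if and only if the subsequence $\mathcal{X}_{mT}$ does: any intermediate index $k=mT+r$ with $0\le r<T$ satisfies $\mathcal{X}_k=(\mathcal{T}_{r-1}\circ\cdots\circ\mathcal{T}_0)(\mathcal{X}_{mT})$ by periodicity, the image of $\mathcal{X}_{mT}$ under one of finitely many fixed bounded operators, so the two limits coincide. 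Consequently MSS is equivalent to $\mathcal{G}_T^{m}(\mathcal{X}_0)\to0$ for every admissible $\mathcal{X}_0$.

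To finish I would remove the restriction to the PSD cone: the admissible initial covariances are exactly the tuples of positive semidefinite matrices, which span the full space of symmetric tuples, so $\mathcal{G}_T^{m}(\mathcal{X}_0)\to0$ for all admissible $\mathcal{X}_0$ is equivalent to $\mathcal{G}_T^{m}\to0$ as a linear operator on $\mathbb{M}^N$. By the standard spectral criterion for a linear map on a finite-dimensional space, the powers $\mathcal{G}_T^{m}$ converge to zero if and only if the spectral radius of $\mathcal{G}_T$ is strictly less than one, i.e.\ $\mathcal{G}_T$ is contractive, which closes both directions of the equivalence.

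The main obstacle I anticipate is the careful bookkeeping in the reduction from the full sequence to the one-period subsequence, together with reconciling the paper's notion of \emph{contractive}. If contractivity is read literally as operator norm $\sigma_m(\mathcal{G}_T)<1$ rather than as spectral radius strictly below one, the implication $\mathcal{G}_T^{m}\to0\Rightarrow\sigma_m(\mathcal{G}_T)<1$ fails in the given norm; one must instead observe that $\mathcal{G}_T^{m}\to0$ is equivalent to the existence of some operator norm in which $\mathcal{G}_T$ is a strict contraction (equivalently, to spectral radius below one), and interpret contractivity in that sense. Confirming that this matches the intended statement from \cite{hou_spectral_2016} is the delicate point.
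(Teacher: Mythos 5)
Your proposal is correct, and there is no competing argument in the paper to compare it against: the paper states this result as imported background (Lemma 1 of the cited reference) and never proves it; the closest thing is the sketch embedded in the proof of \Cref{thm4}, which invokes exactly your reduction --- $T$-periodicity of the covariance recursion \eqref{covprop} turns MSS into the question of whether the powers $\mathcal{G}_T^{m}$ annihilate every admissible $\mathcal{X}_0$. Your bookkeeping (trace is norm-equivalent to the matrix norm on the PSD cone, $\mathcal{X}_{mT}=\mathcal{G}_T^{m}(\mathcal{X}_0)$ by periodicity of the $\mathcal{T}_k$, and intermediate indices $k=mT+r$ handled by the finitely many fixed maps $\mathcal{T}_{r-1}\circ\cdots\circ\mathcal{T}_0$) is sound and supplies what the citation leaves implicit.

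Two refinements. First, your closing caveat about ``contractive'' is a genuine catch, not hedging: the paper defines $\sigma_m(\mathcal{F})=\sup_{\|X\|=1}\|\mathcal{F}(X)\|$, which is an induced operator norm, not a spectral radius, and under that literal reading the \emph{only if} direction of the theorem is false. Take a single mode, $T=1$, and nilpotent $\phi_0=\begin{bsmallmatrix}0&2\\0&0\end{bsmallmatrix}$: the system is dead-beat (hence MSS), yet $\sigma_m(\mathcal{G}_T)=\|\phi_0\|^2=4>1$. So the statement only holds with ``contractive'' read as spectral radius strictly less than one (equivalently, strict contraction in \emph{some} induced norm), which is how the original reference states it and how your proof in effect uses it. Second, a small step to tighten: PSD tuples span only the symmetric tuples, not all of $\mathbb{M}^N$ as the paper defines it, so your spanning argument gives $\mathcal{G}_T^{m}\to 0$ only on the symmetric subspace. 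This does suffice --- the symmetric subspace is invariant under $\mathcal{G}_T$, and because $\mathcal{G}_T$ preserves the PSD cone its spectral radius on the full space is attained on that cone, hence on the symmetric subspace --- but the equivalence as you wrote it (``for all admissible $\mathcal{X}_0$'' iff ``as a linear operator on $\mathbb{M}^N$'') silently steps over this point.
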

\section{Stability Criterion for LTVPM}

\subsection{Periodic Lyapunov Lemma for MJLS}\label{PLL_MJLS_sec}
We first generalize \Cref{periodicLyapunovlemma} to Markov jump systems:
\begin{theorem}\label{thm4}
A LTVPM is MSS iff $\exists$ a T-periodic finite set of matrices $P_k\in\mathbb{M}^N$ such that $P_k(i)\succ 0 \quad \forall i \in \Omega$, and
\begin{equation}\label{pll_MJLS}
    \begin{aligned}
        P_k - \mathcal{L}_k(P_{k+1}) \succ 0 \quad\quad  \forall k\in\mathbb{Z}_0^{T-1}
    \end{aligned}
\end{equation}
\end{theorem}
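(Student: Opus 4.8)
The plan is to collapse the $T$-step recursion in \eqref{pll_MJLS} into a single condition on the one-period operator and then invoke the spectral characterization of mean square stability. First I would observe that $\mathcal{F}_T$ from \eqref{gkdef} is exactly the adjoint of $\mathcal{G}_T$: by \Cref{operatoradjoint} each $\mathcal{L}_k=\mathcal{T}_k^\star$, and since the adjoint of a composition reverses the order of its factors, $\mathcal{G}_T^\star=\mathcal{T}_0^\star\circ\cdots\circ\mathcal{T}_{T-1}^\star=\mathcal{L}_0\circ\cdots\circ\mathcal{L}_{T-1}=\mathcal{F}_T$. Combining the spectral stability result (Lemma 1 in \cite{hou_spectral_2016}), which states that the LTVPM is MSS iff $\mathcal{G}_T$ is contractive, with \Cref{op_adj}, I get that MSS is equivalent to $\mathcal{F}_T$ being contractive. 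It therefore suffices to show that the existence of the $T$-periodic family $\{P_k\}$ obeying \eqref{pll_MJLS} is equivalent to contractivity of $\mathcal{F}_T$, which recasts the whole statement as a Lyapunov characterization of a single positive operator, in direct analogy with \Cref{LyapunovLTI}.

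For sufficiency I would assume the $P_k\succ0$ exist and exploit that each $\mathcal{L}_k$ is a positive, order-preserving operator (the weights $p_{ij}$ in $\mathcal{E}^i$ are nonnegative and congruence by $\phi_k(i)$ preserves the semidefinite order). Starting from $P_0\succ\mathcal{L}_0(P_1)$ and repeatedly applying the monotone map $\mathcal{L}_0\circ\cdots$ along the chain $P_k\succ\mathcal{L}_k(P_{k+1})$, together with the periodic closure $P_T=P_0$, telescopes to $P_0-\mathcal{F}_T(P_0)\succ0$; since $\mathcal{F}_T$ is positive and $P_0\succ0$, this Lyapunov inequality certifies contractivity of $\mathcal{F}_T$ and hence MSS. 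For necessity I would assume MSS, so $\mathcal{F}_T$ is contractive, and run the same equivalence in reverse to produce some $P_0\succ0$ with $P_0-\mathcal{F}_T(P_0)\succ0$. To recover the full family I would fix any $Q_k\succ0$ (say $Q_k=I$) and solve the $T$-periodic Lyapunov equation $P_k=\mathcal{L}_k(P_{k+1})+Q_k$ with $P_T=P_0$; contractivity makes $\mathcal{I}-\mathcal{F}_T$ invertible with positive inverse $\sum_{n\geq0}\mathcal{F}_T^{\,n}$, so the resulting $P_k$ are well defined, $T$-periodic, positive definite, and satisfy \eqref{pll_MJLS} by construction.

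The main obstacle is the positive-operator Lyapunov equivalence invoked in both directions: \Cref{LyapunovLTI} is stated for the single-step operator $\mathcal{T}$ of a time-invariant MJLS, whereas the operator that actually appears here is the composite $\mathcal{F}_T$, which is not literally of that form. I would therefore need to either re-derive the fact that $\exists\,P\succ0$ with $P-\Phi(P)\succ0$ iff $\Phi$ is contractive for a generic positive linear operator $\Phi$, using the Neumann series $\sum_n\Phi^n$, or argue that $\mathcal{G}_T$ is the one-step operator of a lifted time-invariant MJLS over the augmented space of length-$T$ mode histories so that \Cref{LyapunovLTI} applies verbatim. A related subtlety I would handle carefully is the gap between contractivity, defined through the induced norm $\sigma_m$, and the spectral radius that the Neumann-series argument controls: I would resolve it by noting that MSS is equivalent to asymptotic decay of the iterates of $\mathcal{G}_T$, and that the telescoped Lyapunov inequality certifies exactly this decay, so the two notions coincide for the purpose of the proof.
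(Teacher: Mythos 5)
Your proposal is correct, and its sufficiency half is essentially the paper's own argument: telescope \eqref{pll_MJLS} over one period to get $P_0 \succ \mathcal{F}_T(P_0)$ (note only the first inequality in the chain needs to be strict, since positive maps preserve $\succeq$ but not necessarily $\succ$), conclude that $\mathcal{F}_T$ is contractive, pass to $\mathcal{G}_T$ via \Cref{operatoradjoint} and \Cref{op_adj}, and invoke the spectral characterization of MSS. The genuine difference is in the necessity direction. The paper constructs the certificate explicitly and probabilistically: it defines $P_k(i)$ in \eqref{Pkdef} as the conditional expectation of $\sum_{t=0}^{\infty}\Phi(k+t,k)^\top\Phi(k+t,k)$ given $\omega_k=i$, proves convergence by grouping the series into blocks of length $T$, and verifies by a conditional-expectation computation that $P_k(i)-\mathcal{L}_k^i(P_{k+1})=I\succ0$, so the slack in \eqref{pll_MJLS} comes out of the calculation for free. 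You instead work at the operator level: MSS gives contractivity of $\mathcal{G}_T$ (the cited spectral lemma), hence of $\mathcal{F}_T$ by adjointness, and you then solve the periodic Lyapunov equation $P_k=\mathcal{L}_k(P_{k+1})+I$, $P_T=P_0$, whose solvability and positive definiteness follow from the Neumann series $\sum_{n\geq0}\mathcal{F}_T^{\,n}$. The two constructions actually produce the same object --- the paper's series \eqref{Pkdef} is exactly the unrolled solution of your periodic Lyapunov equation with $Q_k=I$ --- but the routes lean on different things. Yours requires the abstract positive-operator fact that a strict Lyapunov inequality at some $P\succ0$ is equivalent to contractivity, which you correctly flag as the main gap and which does follow from the monotone Neumann-type argument you sketch (from $P-\mathcal{F}_T(P)\succ0$ the partial sums $\sum_{n<N}\mathcal{F}_T^{\,n}(P-\mathcal{F}_T(P))$ are bounded above by $P$, forcing $\mathcal{F}_T^{\,n}(X)\to0$ for every $X\succeq0$). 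In exchange, you avoid the paper's most delicate probabilistic step --- its assertion that MSS implies $\|\Phi(T,0)\|<1$, which as written is a pathwise claim about a random matrix and is only meaningful in a mean-square sense --- whereas the paper's route buys an explicit, interpretable certificate (a cost-to-go/Gramian-type matrix) without ever needing the abstract operator lemma in the necessity direction.
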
  
\begin{proof}
To first see that \eqref{pll_MJLS} implies MSS, we first iteratively use \eqref{pll_MJLS} over one period to get \eqref{thm3p1}:
\begin{equation}\label{thm3p1}
    \begin{aligned}
        P_0 \succ & \mathcal{L}_0(P_{1})
        \succ  \mathcal{L}_0( \mathcal{L}_1(P_{2}))\succ\cdots\\
        &\cdots \succ \left[\mathcal{L}_0\circ \mathcal{L}_1 \cdots \mathcal{L}_{T-1}\right](P_0) = \mathcal{F}_T(P_0)\\
    \end{aligned}
\end{equation}
Therefore, this implies that the linear operator $\mathcal{F}_T$ is contractive, or its spectral radius is less than one. Now, from \cref{operatoradjoint}, we know that $\mathcal{L}_k^\star=\mathcal{T}_k$, which implies $\mathcal{G}_T=\mathcal{T}_{T-1}\circ\mathcal{T}_{T-2}\cdots\mathcal{T}_0$ is also contractive using \Cref{op_adj}. 

Now, using \eqref{gkdef}, we can consider covariance propagation for one-time period $T$  as $\mathcal{X}_T=\mathcal{G}_T(\mathcal{X}_0)$. For a LTVPM, the dynamics are $T$-periodic, which implies that the dynamics for $\mathcal{X}_T=\mathcal{G}_T(\mathcal{X}_0)$ is also $T$-periodic. Therefore, $\lim_{k\rightarrow\infty}X_k$ exists and will converge to zero iff $\sigma_m(\mathcal{G}_T)<1$, or, in other words, the operator $\mathcal{G}_T$ is contractive, which implies MSS.

For the \textit{only if} part, we first assume that the system is MSS. We then define the state transition matrix for a given mode history $\{\omega_{k_0}\ldots\omega_{k-1}\}$ as:
\begin{equation}
    \Phi(k,k_0) = \phi_{k-1}(\omega_{k-1})\ldots\phi_{k_0}(\omega_{k_0})
\end{equation}
The MSS assumption of the MJLS implies $\|\Phi(T,0)\|<1$. By definition, $\Phi(k_0,k_0)\triangleq I$. For the same mode history $\{\omega_{k_0}\ldots\omega_{k_1}\ldots\omega_{k_2-1}\}$, the following relation stands: $\Phi(k_2,k_0)=\Phi(k_2,k_1)\Phi(k_1,k_0)\;\;\:\forall k_2\geq k_1 \geq k_0$.

Now, let us define 
\begin{equation}\label{Pkdef}
    P_k(i) \triangleq \mathbb{E}\left[ \sum_{t=0}^\infty \Phi(k+t,k)^\top\Phi(k+t,k) \mid \omega_k=i \right]
\end{equation}
We first show that this matrix infinite series exists. For $r\in\mathbb{N}$,
\begin{equation}
    \begin{aligned}
         \mathbb{E}\left[ \sum_{t=0}^{rT-1} \Phi(k+t,k)^\top\Phi(k+t,k) \mid \omega_k=i \right]         \\
        = \mathbb{E}\left[\sum_{j=0}^{r-1}(\Phi(k+T,k)^\top)^j\Psi(k)(\Phi(k+T,k))^j \right]   
    \end{aligned}
\end{equation}
where $\Psi(k)=\sum_{t=0}^{T-1} \Phi(k+t,k)^\top\Phi(k+t,k)$. If we take the limit $r\rightarrow\infty$, $\|\Phi(T,0)\|<1$ guarantees the existence of $P_k(i)$ defined in \eqref{Pkdef}. Now, RHS of \eqref{Pkdef} can be written as:
\begin{equation}\label{pkdef2}
    \begin{aligned}
        \mathbb{E}\left[ \sum_{t=0}^\infty \Phi(k+1+t,k)^\top\Phi(k+1+t,k) \mid \omega_k=i \right]   +I
    \end{aligned}
\end{equation}
Simplifying the first term, we get
\begin{equation}\label{pk1}
\begin{aligned}
    \mathbb{E}\left[ \sum_{t=0}^\infty \Phi(k+1+t,k+1)^\top\Phi(k+1+t,k+1) \mid \omega_k=i \right] \\
    =\phi_k^\top(i)\left(\sum_j p_{ij} P_{k+1}(j)\right)\phi_k(i)\;    = \phi_k^\top(i)\mathcal{E}^i(P_{k+1}) \phi_k(i)
\end{aligned}
\end{equation}
Using \eqref{pk1} and \eqref{pkdef2} in \eqref{Pkdef}, we get 
\begin{equation}\label{thm4finaleq}
    P_k(i) = \phi_k^\top(i)\mathcal{E}^i(P_{k+1}) \phi_k(i) + I
\end{equation}
which implies $P_k(i) - \mathcal{L}_k^i(P_{k+1})=I\succ 0$.
\end{proof}
Next, we utilize \Cref{thm4} to find an upper bound for a quadratic cost metric, detailed in \cref{lemma_performance}:
\begin{lemma}\label{lemma_performance}
    (Performance analysis lemma) Suppose there exist T-periodic mode-dependent positive definite matrices $M_k(i)$ and $P_k(i)$, and a positive scalar constant $\beta>0$ such that 
    \begin{equation}\label{per_lemma}
        \begin{aligned}
            P_k(i) \succ \mathcal{L}_k^i(P_{k+1}) + \frac{M_k(i)}{\beta}
        \end{aligned}
    \end{equation}
    $\forall k \in \mathbb{Z}_0^{T-1}$ and $i\in\Omega$, then the corresponding MJLS is MSS, with $\mathbb{E}  \left[\sum_{k=0}^\infty x_k^\top M_k(i)x_k\right] \leq \beta x_0^\top P_0(i) x_0$.
\end{lemma}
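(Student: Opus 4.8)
The plan is to get mean square stability essentially for free from \Cref{thm4}, and then extract the performance bound from a telescoping stochastic Lyapunov argument built on the function $V_k \triangleq x_k^\top P_k(\omega_k) x_k$.

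First I would observe that since $M_k(i)\succ 0$ and $\beta>0$, the hypothesis \eqref{per_lemma} implies $P_k(i)\succ\mathcal{L}_k^i(P_{k+1})$ for all $k\in\mathbb{Z}_0^{T-1}$ and $i\in\Omega$. This is precisely condition \eqref{pll_MJLS}, so MSS follows directly from \Cref{thm4}. In particular this secures $\lim_{k\to\infty}\mathbb{E}\left[\|x_k\|^2\right]=0$, which I will need for the limiting argument. Next I would compute the one-step conditional expectation of $V_k$. Using the closed-loop dynamics $x_{k+1}=\phi_k(\omega_k)x_k$ and the Markov property of $\{\omega_k\}$, conditioning on $\{x_k,\omega_k=i\}$ gives
\[
\mathbb{E}\left[V_{k+1}\mid x_k,\omega_k=i\right]=x_k^\top\phi_k^\top(i)\Big(\sum_{j\in\Omega}p_{ij}P_{k+1}(j)\Big)\phi_k(i)x_k=x_k^\top\mathcal{L}_k^i(P_{k+1})x_k,
\]
where the last equality invokes the definitions of $\mathcal{E}^i$ and $\mathcal{L}_k^i$ in \eqref{operators}. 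Substituting \eqref{per_lemma} then yields the per-step decay inequality $\mathbb{E}\left[V_{k+1}\mid x_k,\omega_k=i\right]-x_k^\top P_k(i)x_k<-\tfrac{1}{\beta}x_k^\top M_k(i)x_k$.

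I would then take the total expectation conditioned on the initial mode $\omega_0=i$ and telescope. Writing $W_k\triangleq\mathbb{E}\left[V_k\mid\omega_0=i\right]$, the tower property gives $W_{k+1}-W_k\le-\tfrac{1}{\beta}\mathbb{E}\left[x_k^\top M_k(\omega_k)x_k\mid\omega_0=i\right]$, where $M_k(\omega_k)$ is the mode-indexed cost matrix (matching the paper's $M_k(i)$ notation with $i$ the active mode). Summing from $k=0$ to $N-1$ produces
\[
\tfrac{1}{\beta}\sum_{k=0}^{N-1}\mathbb{E}\left[x_k^\top M_k(\omega_k)x_k\right]\le W_0-W_N\le W_0=x_0^\top P_0(i)x_0,
\]
using $W_N\ge 0$ since each $P_N(i)\succ 0$. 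Letting $N\to\infty$ and multiplying through by $\beta$ then gives the claimed bound $\mathbb{E}\left[\sum_{k=0}^\infty x_k^\top M_k(\omega_k)x_k\right]\le\beta\,x_0^\top P_0(i)x_0$.

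The step I expect to need the most care is the passage to the limit. Since $M_k(i)\succ 0$, the summands are nonnegative, so the partial sums are monotone and bounded above by $\beta\,x_0^\top P_0(i)x_0$; hence they converge. To see the telescoped upper bound is the one stated, I must verify $W_N\to 0$: because the $T$-periodic family $\{P_k(i)\}$ is finite and therefore uniformly norm-bounded, one has $0\le W_N\le c\,\mathbb{E}\left[\|x_N\|^2\right]$ for some constant $c$, and MSS drives the right-hand side to zero. The only other subtlety is the careful bookkeeping of the conditional expectations under Markovian switching, ensuring the cross terms vanish and the operator $\mathcal{L}_k^i$ emerges exactly as in \eqref{operators}; this is routine given the Markov property but must be stated precisely to keep the telescoping valid.
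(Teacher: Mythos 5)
Your proof is correct and follows essentially the same route as the paper's: mean square stability is obtained by observing that \eqref{per_lemma} implies \eqref{pll_MJLS} so that \Cref{thm4} applies, and the bound comes from the same conditional-expectation telescoping of $x_k^\top P_k(\omega_k)x_k$. The only (minor) difference is in handling the tail term: you discard $W_N$ using $W_N\ge 0$ and monotone convergence of the partial sums, whereas the paper sums over whole periods and uses MSS to send $\mathbb{E}\left[x_{rT}^\top P_{rT}x_{rT}\right]$ to zero --- your variant is marginally cleaner, since the performance bound then does not depend on invoking MSS at the limiting step.
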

\begin{proof}
\eqref{per_lemma} already satisfies \eqref{pll_MJLS} since we assume $M_k(i)\succ 0$ and $\beta>0$, which implies the corresponding MJLS is MSS. Now, pre and post-multiplying  \eqref{per_lemma} with $x_k^\top$ and $x_k$ respectively, and using the $x_{k+1}=\phi_k(i)x_k$, we have:
\begin{equation}
    x_k^\top P_k(i) x_k \geq x_{k+1}^\top\mathcal{E}^i(P_{k+1})x_{k+1} + \frac{x_k^\top M_k(i)x_k}{\beta}
    \label{pl4}
\end{equation}

Now, considering the conditional expectation term $\mathbb{E}\left[x_{k+1}^\top P_{k+1}x_{k+1}\lvert \mathcal{M}_k \right]$, $x_{k+1}$ is a deterministic quantity as the mode at time step $k$ is given. Therefore, we can write $ x_{k+1}^\top\mathcal{E}^i(P_{k+1})x_{k+1} = \mathbb{E}\left[x_{k+1}^\top P_{k+1}x_{k+1}
\lvert\mathcal{M}_k\right]$.  Taking expectation of both sides in \eqref{pl4} and using law of total expectation, we get:
\begin{equation}
    \mathbb{E}\left[ x_k^\top P_k(i) x_k\right] \geq \mathbb{E} \left[ x_{k+1}^\top P_{k+1}x_{k+1}\right] + \frac{\mathbb{E}\left[ C_k(i)\right]}{\beta}
     \label{pl5}
\end{equation}
where $C_k(i)=x_k^\top M_k(i) x_k$. Adding \eqref{pl5} from $k=0$ to $T-1$, we have the following inequality:
\begin{equation}
    \mathbb{E}\left[ x_0^\top P_0(i) x_0\right] \geq \mathbb{E} \left[ x_{T}^\top P_{T}x_{T} \right] + \frac{ \sum_{k=0}^{T-1}\mathbb{E}\left[ C_k(i)\right]}{\beta}
    \label{pl5a}
\end{equation}
Adding \eqref{pl5a} for $r$-time periods, we get:
\begin{equation}\nonumber
    \mathbb{E}\left[ x_0^\top P_0(i) x_0\right] \geq \mathbb{E} \left[ x_{rT}^\top P_{rT}x_{rT}\right] + \frac{ \sum_{k=0}^{rT-1}\mathbb{E}\left[ C_k(i)\right]}{\beta}
    \label{pl6}
\end{equation}
Taking the limit $r\rightarrow\infty$ implies $\mathbb{E}\{  x_{rT}^\top P_{rT}x_{rT}\} \rightarrow 0$ from MSS, so we have $\sum_{k=0}^{\infty} \mathbb{E}\left[ C_k(i)\right] \leq \beta x_0^\top P_0(i) x_0$.
\end{proof}
\subsection{Relaxed Stability Criterion for LTVPM}\label{relaxed_PLL_sec}

\Cref{thm4} requires the Lyapunov function to decrease at every time step, which can be quite conservative in practice. We now relax this condition by proving that a finite decrease of the Lyapunov function over one period is sufficient for MSS.


\begin{theorem} \label{thm_rel}
    A LTVPM is MSS iff $\exists$ a T-periodic finite set of matrices $P_k\in \mathbb{M}^N$ and a finite sequence of positive real numbers $\{\nu_k\}_0^{T-1}$ such that $P_k(i)\succ 0\quad \forall i \in\Omega$, and 
    \begin{subequations}
        \begin{equation}\label{pll_rel_MJLS}
            \nu_k P_k - \mathcal{L}_k(P_{k+1}) \succeq 0 \quad\quad  \forall k\in\mathbb{Z}_0^{T-1}
        \end{equation}
        \begin{equation}\label{nu_cond}
            \hspace{-18pt}\prod_{k=0}^{T-1} \nu_k <1
        \end{equation}
    \end{subequations}
\end{theorem}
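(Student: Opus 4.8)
The plan is to establish both implications by reusing the operator-theoretic machinery already developed for \Cref{thm4}, with the genuinely new work concentrated in the sufficiency (\emph{if}) direction. The starting point is that each $\mathcal{L}_k$ is a \emph{monotone} operator on the cone of positive semidefinite block-diagonal matrices: because $\mathcal{L}_k^i(V)=\phi_k^\top(i)\,\mathcal{E}^i(V)\,\phi_k(i)$ and $\mathcal{E}^i$ is a nonnegative combination of its arguments, $V\succeq W$ forces $\mathcal{L}_k(V)\succeq\mathcal{L}_k(W)$. Rewriting \eqref{pll_rel_MJLS} as $\mathcal{L}_k(P_{k+1})\preceq\nu_k P_k$ and invoking $T$-periodicity ($P_T=P_0$), I would iterate through the composition $\mathcal{F}_T=\mathcal{L}_0\circ\cdots\circ\mathcal{L}_{T-1}$: applying $\mathcal{L}_{T-1}$ to $P_0$ gives $\mathcal{L}_{T-1}(P_0)\preceq\nu_{T-1}P_{T-1}$, then applying $\mathcal{L}_{T-2}$ and using monotonicity together with the $k=T-2$ bound gives $\preceq\nu_{T-1}\nu_{T-2}P_{T-2}$, and so on down to $\mathcal{L}_0$, yielding
\begin{equation}\nonumber
    \mathcal{F}_T(P_0)\preceq\Big(\prod_{k=0}^{T-1}\nu_k\Big)P_0=\gamma P_0,\qquad \gamma\triangleq\prod_{k=0}^{T-1}\nu_k<1.
\end{equation}

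From this point the argument is identical to \Cref{thm4}: a single $P_0\succ0$ with $\mathcal{F}_T(P_0)\preceq\gamma P_0$ and $\gamma<1$ makes the positive operator $\mathcal{F}_T$ contractive. Then \cref{operatoradjoint} identifies $\mathcal{F}_T^\star=\mathcal{G}_T$, \cref{op_adj} transfers contractivity to $\mathcal{G}_T$, and the LTVPM criterion (Lemma~1 in \cite{hou_spectral_2016}) equates contractivity of $\mathcal{G}_T$ with MSS. The elegant feature here is that the per-step inequality is only non-strict ($\succeq$), yet the \emph{strict} product bound \eqref{nu_cond} alone supplies the strict contraction $\gamma<1$, so the relaxation is decoupled from the stability conclusion.

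For the \emph{only if} direction I would not reprove anything from scratch. Assuming MSS, \Cref{thm4} furnishes $T$-periodic $P_k(i)\succ0$ with the \emph{strict} inequalities $\mathcal{L}_k^i(P_{k+1})\prec P_k(i)$ for all $i\in\Omega$. For each $k$ I would take $\nu_k$ to be the maximum over $i$ of the largest generalized eigenvalue of the pencil $(\mathcal{L}_k^i(P_{k+1}),P_k(i))$; strictness forces this quantity into $[0,1)$, and enlarging it slightly when it vanishes gives $\nu_k\in(0,1)$ with $\nu_k P_k(i)\succeq\mathcal{L}_k^i(P_{k+1})$ for every $i$. This verifies \eqref{pll_rel_MJLS}, and since each factor lies strictly below $1$, \eqref{nu_cond} holds automatically.

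The step I expect to be the main obstacle is the passage from the single operator inequality $\mathcal{F}_T(P_0)\preceq\gamma P_0$ to contractivity of $\mathcal{F}_T$. This is a Perron--Frobenius/Collatz--Wielandt type statement that leans essentially on positivity of $\mathcal{F}_T$; it can be made rigorous by iterating to obtain $\mathcal{F}_T^{\,n}(P_0)\preceq\gamma^{n}P_0\to0$ and then, using $P_0\succ0$ to dominate an arbitrary PSD argument, concluding $\mathcal{F}_T^{\,n}\to0$ and hence spectral radius below one. Since \Cref{thm4} already invokes exactly this implication, I would cite that reasoning rather than redevelop it, keeping the focus on the monotone iteration that produces $\gamma$.
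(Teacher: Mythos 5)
Your proposal is correct and follows essentially the same route as the paper: the monotone iteration of \eqref{pll_rel_MJLS} over one period (with $P_T=P_0$) to obtain $\mathcal{F}_T(P_0)\preceq\big(\prod_{k}\nu_k\big)P_0$ and hence contractivity of $\mathcal{F}_T$, followed by the adjoint/covariance argument already used in \Cref{thm4} for MSS, and the converse by taking the strict-inequality matrices from \Cref{thm4} and choosing each $\nu_k<1$. Your write-up is somewhat more explicit than the paper's---spelling out operator monotonicity, the Perron--Frobenius-type step behind ``$\mathcal{F}_T(P_0)\preceq\gamma P_0$ implies contractivity,'' and the generalized-eigenvalue construction of $\nu_k$ in the necessity direction---but these are refinements of the same argument rather than a different approach.
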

\begin{proof}
    We first iteratively use \eqref{pll_rel_MJLS} over one period to get:
    \begin{equation}\label{rel_p1}
    \begin{aligned}
        P_0 \succeq \frac{\mathcal{L}_0(P_{1})}{\nu_0}
        \succeq\cdots\succeq \frac{\left[\mathcal{L}_0\circ \mathcal{L}_1 \cdots \mathcal{L}_{T-1}\right](P_0)}{\prod_{k=0}^{T-1} \nu_k}\succ\mathcal{F}_T(P_0)
    \end{aligned}
\end{equation}
where the last step follows from \eqref{nu_cond}. 
\eqref{rel_p1} implies the operator $\mathcal{F}_t$ is contractive which proves MSS, as shown previously in the proof of \Cref{thm4}.

For the \textit{only if} part, we first assume that the system is MSS. From \Cref{thm4}, we know that this is equivalent to the existence of a T-periodic finite set of positive definite matrices $P_k\in\mathbb{M}^N$ such that \eqref{pll_MJLS} is true. \eqref{pll_MJLS} implies that if we choose each $\nu_k<1$, $\exists$ a finite sequence $\{\nu_k\}_0^{T-1}$ which obey \eqref{pll_rel_MJLS}. Also, since each $\nu_k<1$, \eqref{nu_cond} is also satisfied.
\end{proof}
\section{LMI-based Controller Design}\label{sec_framework}
We consider a general optimal control problem of the form:
\begin{equation}
    \begin{aligned}
        \min_{K_k(i)} J \\
        \text{s.t.}  \forall i \in \Omega,k\in\mathbb{N},\quad
            {{x}}_{k+1} &= A_k(i) {x}_k + B_k(i) {u}_k,\\
            A_k(i)&=A_{k+T}(i)\\
            B_k(i)&=B_{k+T}(i)\\
             p_{ij} &= \mathbb{P}(r_{k+1}=j|r_k=i)\\
             u_k(i)&=K_k(i)x_k(i)\\
            \|u_k(i)\| &\leq u_m(i) \quad \quad  w.p.\;\; 1 \\
            x_k(i)^\top W_k(i) x_k(i) &\leq 1\quad \quad \quad \quad w.p.\;\; 1  
    \end{aligned}
    \label{ocp_MJLS}
\end{equation}
We consider two optimal control problems (OCP) which aim to stabilize LTVPM. The two problems differ in the initial conditions and objective functions: Problem 1 minimizes a quadratic cost, with the initial state contained in a convex hull, while Problem 2 maximizes the region of attraction for the system. The corresponding OCP is given below:
\begin{problem}\label{prob1}
    Solve \eqref{ocp_MJLS} with  $x_0\in conv\{x_{0,v}\},\forall v \in \mathbb{Z}_1^l$ and $J = \mathbb{E}\left[\sum_{k=0}^{\infty} x_k^\top Q(i) x_k + u_k^\top R(i) u_k\right]$.
\end{problem}
\begin{problem}\label{prob2}
    Solve \eqref{ocp_MJLS} for $J = -tr(\mathcal{R})$ where R is the largest set such that if $x_0\in \mathcal{R}$, then $\lim_{k\rightarrow\infty}\mathbb{E}\left[||x_k||^2\right] \rightarrow 0$.
\end{problem}

\Cref{LMI1_sec} and \Cref{LMI2_sec} formulate Problem 1 and 2 in a semi-definite program (SDP):
\subsection{Problem 1 control synthesis SDP}\label{LMI1_sec}
\begin{theorem}\label{LMI1}
The optimal controller for \Cref{prob1} is given by $K_k(i)=\bar{Y}_k(i)\bar{S}_k^{-1}(i)$, where $\bar{Y}_k(i)$ and $\bar{S}_k^{-1}(i)$ are the optimal value of the following SDP:
\begin{equation*}
\begin{aligned}
\min_{\beta,Y_k(i),S_k(i)} \beta \\
\end{aligned}
\end{equation*}
\begin{subequations}
   \begin{equation}
   \text{s.t.}\quad     \begin{bsmallmatrix}
            1 & \ast \\
x_{0, v} & S_0(i)
        \end{bsmallmatrix}
         \geq 0, \quad \forall v=\mathbb{Z}_1^l
         \label{t1}
    \end{equation}
    \begin{equation}
        \begin{bsmallmatrix}
            S_k(i) & \ast & \ast & \ast  \\
    \ell_{i}^{\top}\left(A_k(i) S_k(i)+B_k(i) Y_k(i)\right) & \operatorname{blkdiag}\left\{S_{k+1}\right\} &  \ast & \ast\\
Q(i)S_k(i) & 0 & \beta I & \ast\\
R(i)Y_k(i) &0 &0 & \beta I
        \end{bsmallmatrix}\geq 0
        \label{t2}
    \end{equation}
    \begin{equation}
        \begin{bsmallmatrix}
            S_k(i) & \ast \\
A_k(i) S_k(i)+B_k(i) Y_k(i) & S_{k+1}(j)
        \end{bsmallmatrix}\geq 0,  \quad  \forall j \in \Omega
        \label{t3}
    \end{equation}
    \begin{equation}
\begin{bsmallmatrix}
u_m(i)^2 I &  \ast \\
Y_k(i)^{\top} & S_k(i)
\end{bsmallmatrix} \geq 0
    \label{t4}
\end{equation}
\begin{equation}
        \begin{bsmallmatrix}I- H_k(i) S_k(i) H_k^\top(i)\end{bsmallmatrix} \geq 0
    \label{t5}
\end{equation}
\end{subequations}





$\forall$ $i\in\Omega, k=\mathbb{Z}_0^{T-1}$, where $\ell_i=[\sqrt{p_{i1}}I\ldots\sqrt{p_{iN}}I]$, $H_k(i)=W_k^{1/2}(i)$, 
and $S_k,Y_k\in\mathbb{M}^N$. $\beta$, $S_k(i)$, $Y_k(i)$ are the decision variables in this SDP and all other matrices are defined from the parameters given in the problem \eqref{ocp_MJLS}.
\end{theorem}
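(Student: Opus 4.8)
The plan is to recognize \eqref{t1}--\eqref{t5} as the image, under the congruence change of variables $S_k(i)=P_k(i)^{-1}$ and $Y_k(i)=K_k(i)S_k(i)$, of the conditions of the performance analysis lemma (\Cref{lemma_performance}) together with the ellipsoidal invariance needed to enforce the almost-sure state and control constraints. First I would substitute the feedback law $u_k=K_k(i)x_k$ into the objective of \Cref{prob1} so that the stage cost becomes $x_k^\top M_k(i)x_k$ with $M_k(i)=Q(i)^\top Q(i)+K_k(i)^\top R(i)^\top R(i)K_k(i)$ (where $Q(i),R(i)$ denote the symmetric square-root factors of the stage-cost weights, as they enter \eqref{t2}), and invoke \Cref{lemma_performance}: any $T$-periodic $P_k(i)\succ0$ and $\beta>0$ satisfying \eqref{per_lemma}, i.e. $P_k(i)\succ\mathcal{L}_k^i(P_{k+1})+M_k(i)/\beta$, certifies MSS and the bound $J\le \beta\, x_0^\top P_0(i)x_0$. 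The remaining work is to convexify this matrix inequality, which is bilinear in $(P_k,K_k)$, and to append the constraints.

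For the convexification (the heart of the argument), I would apply the congruence transform by $P_k(i)^{-1}=S_k(i)$ to \eqref{per_lemma} and set $Y_k(i)=K_k(i)S_k(i)$, so that $\phi_k(i)S_k(i)=A_k(i)S_k(i)+B_k(i)Y_k(i)$ and $K_k(i)S_k(i)=Y_k(i)$ appear affinely. The resulting inequality $S_k- S_k\mathcal{L}_k^i(P_{k+1})S_k-\tfrac1\beta(S_kQ^\top QS_k+Y_k^\top R^\top RY_k)\succeq0$ is then written as the single LMI \eqref{t2} by two Schur complements: the lower-right pair of $\beta I$ blocks reproduces the stage-cost term $\tfrac1\beta(S_kQ^\top QS_k+Y_k^\top R^\top RY_k)$, while the block $\mathrm{blkdiag}\{S_{k+1}\}$ together with $\ell_i=[\sqrt{p_{i1}}I\,\cdots\,\sqrt{p_{iN}}I]$ reproduces $S_k\mathcal{L}_k^i(P_{k+1})S_k$. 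The key identity is that the Schur complement against $\mathrm{blkdiag}\{S_{k+1}\}$ yields $(A_kS_k+B_kY_k)^\top\big(\sum_j p_{ij}S_{k+1}(j)^{-1}\big)(A_kS_k+B_kY_k)=S_k\phi_k^\top\mathcal{E}^i(P_{k+1})\phi_k S_k$, precisely because $\sum_j p_{ij}S_{k+1}(j)^{-1}=\mathcal{E}^i(P_{k+1})$; encoding the conditional expectation $\mathcal{E}^i$ through the weighting $\ell_i$ is the main obstacle and the step I expect to require the most care.

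Next I would pin down the cost via the initial condition. Imposing $x_0^\top P_0(i)x_0\le1$ reduces the guaranteed cost to $J\le\beta$, so minimizing the scalar $\beta$ minimizes the certified cost; the constraint $x_0^\top P_0(i)x_0\le1$ is equivalent, after a Schur complement with $S_0(i)=P_0(i)^{-1}$, to \eqref{t1}, and since $x\mapsto x^\top P_0 x$ is convex it suffices to enforce it at the vertices $x_{0,v}$ of the convex hull.

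Finally I would show that \eqref{t3}--\eqref{t5} enforce the almost-sure constraints through ellipsoidal invariance. The sets $\mathcal{E}_k(i)=\{x:\,x^\top S_k(i)^{-1}x\le1\}$ form an invariant family: \eqref{t3}, via a Schur complement, is $\phi_k(i)^\top P_{k+1}(j)\phi_k(i)\preceq P_k(i)$ for every successor mode $j$, so $x_k\in\mathcal{E}_k(i)$ forces $x_{k+1}\in\mathcal{E}_{k+1}(j)$ on every sample path; combined with \eqref{t1} this gives $x_k\in\mathcal{E}_k(\omega_k)$ with probability one by induction. Then \eqref{t4} ($Y_kS_k^{-1}Y_k^\top\preceq u_m(i)^2 I$) bounds $\|K_k(i)x_k\|\le u_m(i)$ and \eqref{t5} ($W_k^{1/2}S_kW_k^{1/2}\preceq I$, i.e. $W_k(i)\preceq P_k(i)$) gives $x_k^\top W_k(i)x_k\le1$ over the whole ellipsoid, hence w.p.\ 1. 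Recovering $K_k(i)=\bar Y_k(i)\bar S_k(i)^{-1}$ from the optimizer then yields a stabilizing, constraint-admissible controller attaining the minimal certified cost $\beta$, establishing the claim; I would close by noting that optimality is in the sense of this guaranteed upper bound, with the converse (admissibility $\Rightarrow$ SDP feasibility) supplied by \Cref{thm4} and \Cref{lemma_performance}.
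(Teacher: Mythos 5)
Your proposal is correct and follows essentially the same route as the paper's proof: the substitution $S_k(i)=P_k(i)^{-1}$, $Y_k(i)=K_k(i)S_k(i)$, Schur complements relating \eqref{t1}--\eqref{t2} to the hypothesis of \Cref{lemma_performance}, the vertex argument reducing the certified cost bound to $J\le\beta$, and ellipsoidal invariance from \eqref{t3} to enforce the almost-sure constraints via \eqref{t4}--\eqref{t5}. The only differences are presentational: you argue in the synthesis direction while the paper argues forward from the LMIs, and your reading of $Q(i),R(i)$ in \eqref{t2} as square-root factors of the stage-cost weights is precisely the interpretation that makes the paper's own Schur-complement step exact.
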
 
\begin{proof}

Using Schur complement on \eqref{t1}, we get:
\begin{equation}
    x_{0, v}^{\top}S_0^{-1}(i)x_{0, v} \leq 1
    \label{p1}
\end{equation}
Using Schur complement again on \eqref{t2}, we have:
\begin{equation}
\begin{aligned}
        S_k(i)- \frac{1}{\beta}(S_k(i)Q_k(i)S_k(i) + Y_k(i)R_k(i)Y_k(i))-\\(\Gamma_k(i))^\top(\sum_{j=1}^{N}p_{ij}S^{-1}_{k+1}(j))\Gamma_k(i)\geq0
\end{aligned}
    \label{p2}
\end{equation}
where $\Gamma_k(i)=A_k(i) S_k(i)+B_k(i) Y_k(i)$.
Pre and post-multiplying \eqref{p2} with $S_k^{-1}(i)$,
\begin{equation}\nonumber
    \begin{aligned}
        P_k(i) \geq\phi_k^\top(i)\mathcal{E}^i(P_{k+1})\phi_k(i)   + \frac{Q_k(i)+K_k^\top(i)R_k(i)K_k(i)}{\beta}
    \end{aligned}
    \label{p3}
\end{equation}
where $P_k(i)=S_k^{-1}(i)$ and $K_k(i) = Y_k(i) S_k^{-1}(i)$. Now, using \Cref{lemma_performance}, the system is MSS, with the upper bound on $J$:
\begin{equation}
     J(K) = \sum_{k=0}^{\infty} \mathbb{E}\left[ C_k(i)\right] \leq \beta x_0^\top P_0(i) x_0
     \label{p6}
\end{equation}
where $C_k(i)=x^\top_kQ_k(i)x_k+u_k(i)^\top R_k(i) u_k(i)$, using \eqref{linearfeedback}.
Any initial state $x_0$ in the convex hull of $x_{0,v}$ can be expressed as $\sum_v \alpha_v x_{0,v}$ where $\sum_v \alpha_v=1$,\;$\alpha_v\geq0$. Therefore, $\begin{bsmallmatrix}
    1 & \ast \\
x_{0} & S_i
\end{bsmallmatrix}=\sum_v \alpha_v\begin{bsmallmatrix}
    1 & \ast \\
x_{0, v} & S_i
\end{bsmallmatrix}\geq 0$
which implies $x_0^\top P_0(i)x_0\leq1\;\;\forall x_0\in conv\{x_{0,1}\cdots x_{0,l}\}$ from \eqref{p1}. This simplifies \eqref{p6} to $J(K)\leq \beta$.
Now, using Schur's complement on \eqref{t3}, 
\begin{equation}
    \begin{aligned}
        S_k(i) \geq (\Gamma_k(i))^\top S_{k+1}^{-1}(j)\Gamma_k(i)
    \end{aligned}
    \label{p3a}
\end{equation}
Pre and post-multiplying with $S_k(i)$ and using $P_k(i)=S_k^{-1}(i)$
\begin{equation}
    \begin{aligned}
        P_k(i) &\geq \phi_k^\top(i)P_{k+1}(j)\phi_k(i)
    \end{aligned}
    \label{pi7}
\end{equation}
Pre and post-multiplying \eqref{pi7} with $x_k^\top$ and $x_k$ respectively,
\begin{equation}
    x_k^\top P_k(i) x_k \geq x_{k+1}^\top P_{k+1}(j) x_{k+1} 
    \label{p7}
\end{equation}
Now, enforcing \eqref{t3} $\forall j,i\in \Omega$ implies that that \eqref{p7} is true under any switching to mode $j$ from any given mode $i$. This is crucial to ensure that the state and control constraints are obeyed with probability one. Combining \eqref{p1} with \eqref{p7} gives:
\begin{equation}
    \begin{aligned}
        x_{k+1}^\top P_{k+1}(j) x_{k+1} \leq x_k^\top P_k(i) x_k \leq x_0^\top P_0(i_0) x_0 \leq 1
    \end{aligned}
    \label{p7a}
\end{equation}
$ \forall k \in \mathbb{Z}_0^{T-1}$, where $i_0$ represents the initial active mode. From the last time step, we get $x_{T-1}^\top P_{T-1}(i) x_{T-1} \geq x_{T}^\top P_T(j) x_{T}$. Repeating the process by utilizing the periodicity of the Lyapunov matrices, we have $x_k^\top P_k(i)x_k\leq1 \;\;\forall k \in \mathbb{Z}$.

\noindent Now, using Schur's complement on \eqref{t4},
\begin{equation}
    Y_k(i)S_{k}^{-1}(i)Y_k^\top(i) \leq u_m^2I
\end{equation}
Taking the norm of this inequality (as both sides are PD,)
\begin{equation}
    \|Y_k(i) S_k^{-1/2}(i)\|^2 \leq u_m^2(i)
\end{equation}
Multiplying both sides with $\|S_k^{-1/2}(i) x_k\|^2$,
\begin{equation}
    \begin{aligned}
        &\|Y_k(i) S_k^{-1/2}(i)\|^2\|S_k^{-1/2}(i) x_k\|^2 \leq u_m^2(i)\|S_k^{-1/2}(i) x_k\|^2\\
        &\implies\|Y_k(i) S_k^{-1}(i) x_k\|^2 \leq u_m^2(i)\|S_k^{-1/2}(i)x_k\|^2\hspace{-5pt}
    \end{aligned}
    \label{p8}
\end{equation}
From \eqref{p7a}, we know $|S_k^{-1/2}(i)x_k\|^2 \leq 1$. Using \eqref{linearfeedback}, \eqref{p8} simplifies to:
\begin{equation}
    \begin{aligned}
           \|u_k(i)\| & \leq u_m(i)
    \end{aligned}
\end{equation}
Now, rearranging \eqref{t5}, we get
\begin{equation}
    H_k(i) S_k(i) H_k^\top(i) \leq I
    \label{p9}
\end{equation}
Taking the norm of both sides of \eqref{p9}, multiplying both sides with $\|S_k^{-1/2}(i) x_k\|^2$, and using \eqref{p7a}, we get:
\begin{equation}\nonumber
    \begin{aligned}
        \|H_k(i)S_k^{1/2}(i)\|^2\|S_k^{-1/2}(i) x_k\|^2 \leq \|S_k^{-1/2}(i) x_k\|^2 \leq 1\\
    \end{aligned}
\end{equation} 
\begin{equation*}
    \implies\|H_k(i) x_k\|^2 \leq1\Longleftrightarrow x_k^\top(i)W_k(i)x_k(i) \leq 1 \qedhere
\end{equation*}
\end{proof}
\subsection{Problem 2 control synthesis SDP}\label{LMI2_sec}
To incorporate Theorem \ref{thm_rel} into controller design, we focus on maximizing region of attraction problems. We utilize the useful property of Lyapunov sublevel sets which allows them to also serve as invariant sets. In particular, if we maximize $\mathcal{R}:=\{x:x_0^\top P_0(i)x_0\leq 1\}$, it will lead to the largest possible region of attraction while guaranteeing MSS. To enforce control and state constraints with probability one, we limit $\nu_k$ to the set $(0,1]$. This is still a relaxation compared to \Cref{thm4} as we allow the Lyapunov function to not decrease. Based on this idea, we formulate Problem 2 in a SDP in \Cref{LMI_rel}. We choose to maximize the trace of $\mathcal{R}$ which is equivalent to maximizing its volume. Other metrics for the size of $\mathcal{R}$ are discussed in \cite{boyd_linear_1994}.
\begin{theorem}\label{LMI_rel}
The optimal controller for \Cref{prob2} is given by $K_k(i)=\bar{Y}_k(i)\bar{S}_k^{-1}(i)$, where $\bar{Y}_k(i)$ and $\bar{S}_k^{-1}(i)$ are the optimal value of the following SDP:
\begin{equation*}
\begin{aligned}
\min_{Y_k(i),S_k(i)} -\text{tr}(\mathbb{E}_\rho[S_0]) \\
\end{aligned}
\end{equation*}
\begin{subequations}
    \begin{equation}\label{tr2}
        \text{s.t.}\quad\begin{bsmallmatrix}
    \nu_kS_k(i) & \ast  \\
    \ell_{i}^{\top}\left(A_k(i) S_k(i)+B_k(i) Y_k(i)\right) & \operatorname{blkdiag}\left\{S_{k+1}\right\}
\end{bsmallmatrix}\geq 0
\end{equation}
\begin{equation}\label{tr3}
{\begin{bsmallmatrix}
S_k(i) & \ast \\
A_k(i) S_k(i)+B_k(i) Y_k(i) & S_{k+1}(j)
\end{bsmallmatrix} \geq 0}  \quad  \forall j \in \Omega
\end{equation}
\begin{equation}
\begin{bsmallmatrix}
u_m(i)^2 I &  \ast \\
Y_k(i)^{\top} & S_k(i)
\end{bsmallmatrix} \geq 0
    \label{tr4}
\end{equation}
\begin{equation}
        \begin{bsmallmatrix}I- H_k(i) S_k(i) H_k^\top(i)\end{bsmallmatrix} \geq 0
    \label{tr5}
\end{equation}
\end{subequations}

$\forall$ $i\in\Omega, k=\mathbb{Z}_0^{T-1}$, where $\nu_k>0$ is the $k^\mathrm{th}$ element of a given finite sequence $\{\nu_k\}_0^{T-1}$ such that $\prod_{k=0}^{T-1} \nu_k <1$, and $\rho$ is the initial mode distribution.
\end{theorem}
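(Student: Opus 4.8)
The plan is to mirror the Schur-complement machinery of \Cref{LMI1}, but route the stability argument through the relaxed criterion \Cref{thm_rel} rather than \Cref{thm4}. First I would take a Schur complement of \eqref{tr2} with respect to its lower-right block $\operatorname{blkdiag}\{S_{k+1}\}$, obtaining $\nu_k S_k(i)-\Gamma_k(i)^\top \ell_i(\operatorname{blkdiag}\{S_{k+1}\})^{-1}\ell_i^\top \Gamma_k(i)\succeq 0$ with $\Gamma_k(i)=A_k(i)S_k(i)+B_k(i)Y_k(i)$. Using the identity $\ell_i(\operatorname{blkdiag}\{S_{k+1}\})^{-1}\ell_i^\top=\sum_{j}p_{ij}S_{k+1}^{-1}(j)=\mathcal{E}^i(S_{k+1}^{-1})$, then pre- and post-multiplying by $S_k^{-1}(i)$ and substituting $P_k(i)=S_k^{-1}(i)$, $K_k(i)=Y_k(i)S_k^{-1}(i)$ so that $S_k^{-1}(i)\Gamma_k(i)^\top=\phi_k^\top(i)$, the inequality collapses to $\nu_k P_k(i)-\mathcal{L}_k^i(P_{k+1})\succeq 0$, which is exactly \eqref{pll_rel_MJLS}. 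Combined with the standing hypothesis \eqref{nu_cond}, i.e.\ $\prod_{k=0}^{T-1}\nu_k<1$, \Cref{thm_rel} then certifies that the closed loop is MSS.

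Next I would establish invariance of the candidate region of attraction. A Schur complement of \eqref{tr3} followed by congruence with $S_k^{-1}(i)$ gives $P_k(i)\succeq\phi_k^\top(i)P_{k+1}(j)\phi_k(i)$ for every $j\in\Omega$, hence $x_k^\top P_k(i)x_k\geq x_{k+1}^\top P_{k+1}(j)x_{k+1}$ along any realized transition $i\to j$. Since the relaxation factors are confined to $\nu_k\in(0,1]$, this per-transition decrease holds with probability one, so the sublevel set $\mathcal{R}=\{x:x^\top P_0(i)x\leq1\}$ is invariant: any trajectory starting in $\mathcal{R}$ remains inside the unit sublevel sets $\{x:x^\top P_k(i)x\leq1\}$ for all $k$, exactly as in \eqref{p7a}. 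With this invariance in hand, the control- and state-constraint arguments of \Cref{LMI1} carry over verbatim: Schur complements of \eqref{tr4} and \eqref{tr5}, together with $\|S_k^{-1/2}(i)x_k\|^2\leq1$, yield $\|u_k(i)\|\leq u_m(i)$ and $x_k^\top W_k(i)x_k\leq1$ with probability one.

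It then remains to connect the objective to \Cref{prob2}. Because the loop is MSS, every $x_0\in\mathcal{R}$ satisfies $\lim_{k\to\infty}\mathbb{E}[\|x_k\|^2]\to0$, so $\mathcal{R}$ is a valid region of attraction on which the constraints hold w.p.\ $1$. The set $\mathcal{R}$ is the intersection of ellipsoids $\{x:x^\top S_0^{-1}(i)x\leq1\}$ whose volumes grow monotonically with the eigenvalues of $S_0(i)$; maximizing $\operatorname{tr}(\mathbb{E}_\rho[S_0])=\sum_i\rho_i\operatorname{tr}(S_0(i))$ is the standard convex surrogate for enlarging this region, averaged over the initial mode distribution $\rho$. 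Hence the minimizer of $-\operatorname{tr}(\mathbb{E}_\rho[S_0])$ delivers the controller $K_k(i)=\bar Y_k(i)\bar S_k^{-1}(i)$.

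The main obstacle I anticipate is justifying the restriction $\nu_k\in(0,1]$. The relaxed criterion \Cref{thm_rel} permits individual $\nu_k>1$ provided only that $\prod_k\nu_k<1$, but such a step would allow $x_k^\top P_k(i)x_k$ to exceed $1$ at an intermediate time and thereby violate a state or control constraint. The delicate point is to show that capping $\nu_k$ at $1$ while simultaneously imposing the per-transition condition \eqref{tr3} retains the MSS guarantee (the product can still be driven strictly below one) yet enforces probability-one invariance of $\mathcal{R}$; a secondary subtlety is rigorously tying the trace objective, and its $\rho$-weighting, to the geometric notion of the ``largest'' region of attraction in \Cref{prob2}.
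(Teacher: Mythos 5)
Your proposal is correct and follows essentially the same route as the paper: Schur complement on \eqref{tr2} to recover \eqref{pll_rel_MJLS} and invoke \Cref{thm_rel} for MSS, Schur complement on \eqref{tr3} for pathwise (probability-one) invariance of the sublevel sets, and the constraint arguments of \Cref{LMI1} carried over verbatim for \eqref{tr4}--\eqref{tr5}, with the trace objective as the surrogate for the size of $\mathcal{R}$. The obstacle you flag is not actually a gap in your own argument: invariance of $\mathcal{R}$ comes entirely from \eqref{tr3} (which, after averaging over $j$ with weights $p_{ij}$, already implies \eqref{tr2} with $\nu_k=1$), so restricting $\nu_k\in(0,1]$ simply keeps \eqref{tr2} non-vacuous while the product condition still certifies MSS --- exactly the resolution the paper adopts in the discussion preceding the theorem.
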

\begin{proof}
Using Schur complement on \eqref{tr2}, we have:
\begin{equation}
\begin{aligned}
        \nu_k S_k(i) - (\Gamma_k(i))^\top(\sum_{j=1}^{N}p_{ij}S^{-1}_{k+1}(j))\Gamma_k(i) \geq 0 
\end{aligned}
    \label{pr2}
\end{equation}
Set $P_k(i)=S_k^{-1}(i)$ and $K_k(i) = Y_k(i) S_k^{-1}(i)$ $\forall k,i \in \Omega$. Pre and post-multiplying \eqref{pr2} with $S_k^{-1}(i)$, we have:
\begin{equation}
    \begin{aligned}
        \nu_k P_k(i) \geq\phi_k^\top(i)\mathcal{E}^i(P_{k+1})\phi_k(i)
    \end{aligned}
    \label{pr3}
\end{equation}
From \Cref{thm_rel}, we know that \eqref{pr3} implies MSS.
State and control constraints can be proved using the same approach as in Theorem \ref{LMI1} for \eqref{t3},\eqref{t4} and \eqref{t5}. 
We also note that \eqref{tr3} enforces that the Lyapunov function \emph{must not increase} at the next time step, across all modes. This implies that if the state is initially in the set $\mathcal{R}:=\{x_0:x_0^\top S^{-1}_0(i)x_0\leq 1\}$, it will stay in that set for all future times, with probability one, serving as an invariant set.
\end{proof}
\section{Numerical Simulation}\label{sims_sec}
We study a LTVPM with the following dynamics:
\begin{equation}
    \begin{aligned}
        &A_k(1)=\begin{bsmallmatrix}
    -0.5 & 2\\
        -0.4 & 0.8\mathrm{sin}(0.2\pi k)
\end{bsmallmatrix},
A_k(2)=\begin{bsmallmatrix}
    0.5\mathrm{cos}(0.2\pi k) & 0.5\\
        0.8 & 0.5
\end{bsmallmatrix}\\
&B_k(1) =\begin{bsmallmatrix}
    1\\1
\end{bsmallmatrix}, \quad B_k(2)=\begin{bsmallmatrix}
    0\\0
\end{bsmallmatrix},\quad
\mathbb{P} = \begin{bsmallmatrix}
    0.8&0.2\\
        0.9&0.1
\end{bsmallmatrix},\quad T=10
    \end{aligned}
    \label{mjls_eg}
\end{equation}
We first discuss the stability of this LTVPM. The one-period state transition matrices for each mode $\Phi_1=\prod_{k=0}^{T-1}A_k(1)$ and $\Phi_2=\prod_{k=0}^{T-1}A_k(2)$ have all its eigenvalues within the unit circle ($\|\Phi_1\|=0.23$ and $\|\Phi_2\|=0.55$) i.e. the individual modes are stabilizing. However, under the Markovian jump dynamics defined by $\mathbb{P}$ in \eqref{mjls_eg}, the one-period operator $\sigma_m(\mathcal{G}_T)=1.255>1$, implying that the LTVPM is unstable. Similar systems with $B_2(k)=0$ can also be used to model control failure for a system, demonstrating the applicability of our work to fault-tolerant control design.

\subsection{Problem 1}

For problem 1, the cost weight matrices are set as: $Q(1)=Q(2)=0, R(1)=R(2)=1$. We use \Cref{LMI1} to synthesize a robust optimal controller for this system. We impose an upper bound on the control norm $u_m(i)=125,\forall i \in\Omega$. The initial state is sampled randomly from $conv\{[\alpha,\alpha],[\alpha,-\alpha],[-\alpha,-\alpha],[-\alpha,\alpha]\}$ where $\alpha=100$, and the initial mode is randomly sampled from a uniform distribution. Solving the LMIs takes $672$ ms on a Macbook Pro M2 Pro using Julia programming language with JuMP interface and Clarabel\cite{goulart_clarabel_2024} solver. After solving for the optimal controller, the one-period STM $\Phi$ was computed and the controlled MJLS was found to be stable ($\sigma_m(\Phi)=0.011$.) 

\Cref{fig:c_xhist} shows the state trajectories for 1000 Monte Carlo simulations with the optimal controller, while \Cref{fig:uc_xhist} plots the same for the uncontrolled case. Covariance was calculated analytically, and $3\sigma$ bounds shown in the plots provide a reasonable estimate for an upper bound of the state trajectories.
\subsection{Problem 2}
For Problem 2, we set the Lyapunov function to not decrease for all but the $4^\mathrm{th}$ time step, whereas a decrease of 0.9 times its previous step is enforced for $k=4$. This implies $\nu_k=0\forall k \in \mathbb{Z}_0^{10}/4$ and $\nu_4=0.9$. We set an upper bound on the control norm and state norm as $u_m(i)=125$ and 
$W_k(i)=I_2/\delta^2$ respectively, $\forall i \in \Omega,$ where $\delta=250$. For Monte Carlo simulations, the initial condition is sampled uniformly from the region of attraction as found by the optimizer. All trajectories are found to stabilize with theoretical guarantees ($\sigma_m(\mathcal{G}_T)=0.015$). \Cref{fig:rel_norm_xhist} and \Cref{fig:rel_uc_xhist} show the state norm history and control norm history for 1000 Monte Carlo simulations respectively. Both the state and control constraints are obeyed with probability one. 
\begin{figure}[htbp!]
     \centering
     \begin{subfigure}{0.24\textwidth}
         \centering
         \includegraphics[width=\textwidth]{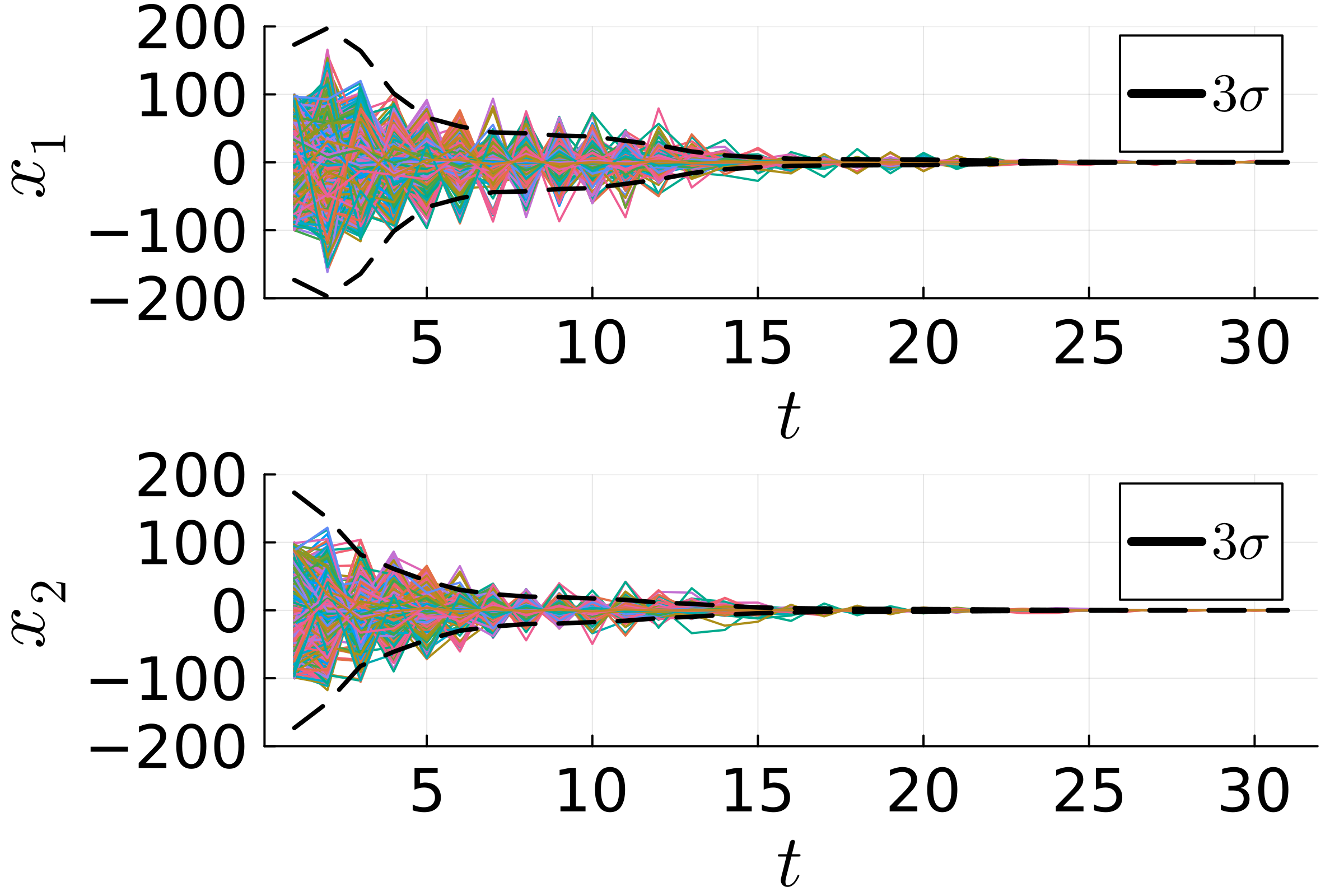}
         \caption{controlled case}
         \label{fig:c_xhist}
     \end{subfigure}
     \hfill
     \begin{subfigure}{0.24\textwidth}
         \centering
         \includegraphics[width=\textwidth]{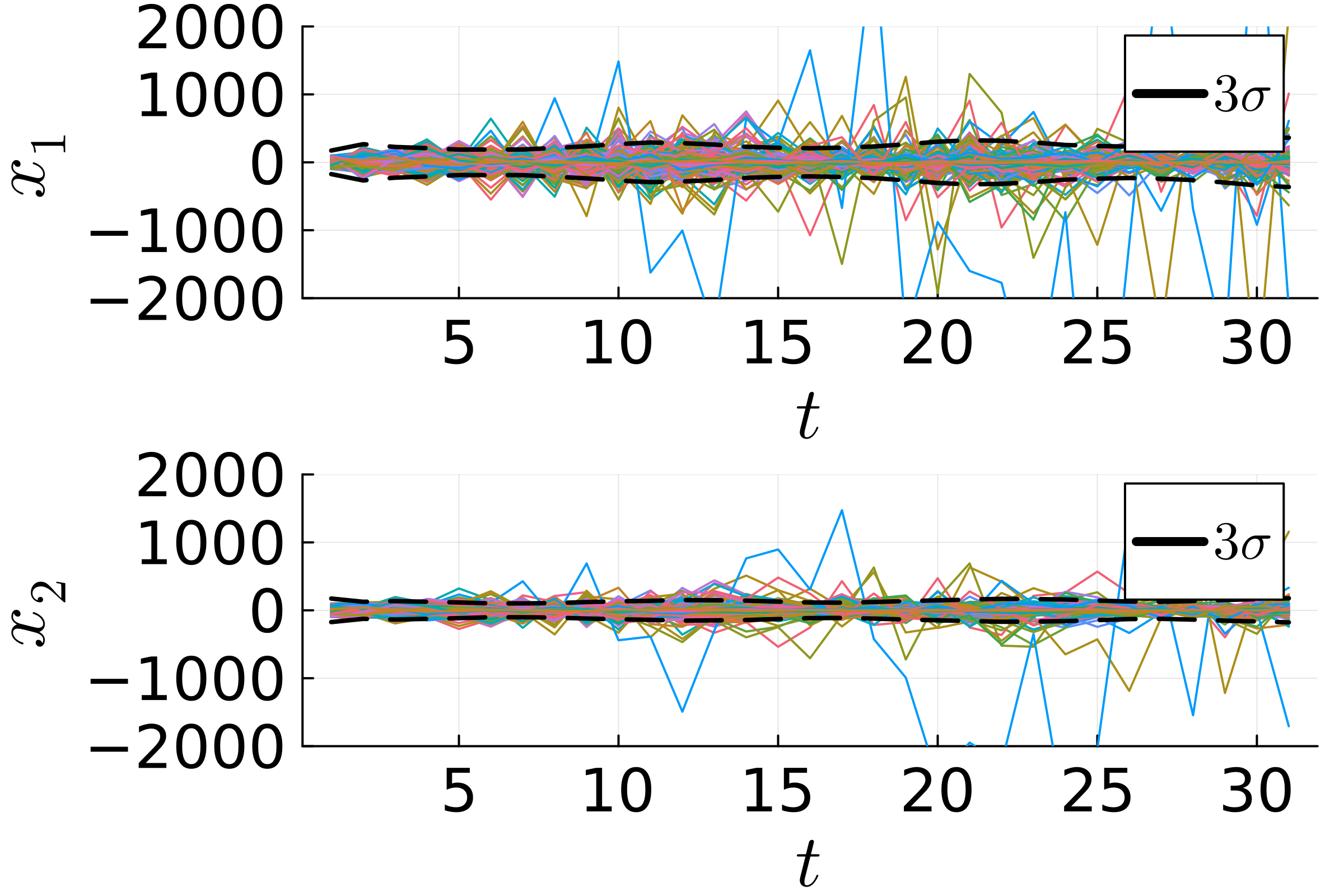}
         \caption{uncontrolled case}
         \label{fig:uc_xhist}
     \end{subfigure}
     \hfill
     \caption{Problem 1 Monte Carlo simulations: State trajectories}
\end{figure}
\begin{figure}[htbp!]
     \centering
     \begin{subfigure}{0.24\textwidth}
         \centering
         \includegraphics[width=\textwidth]{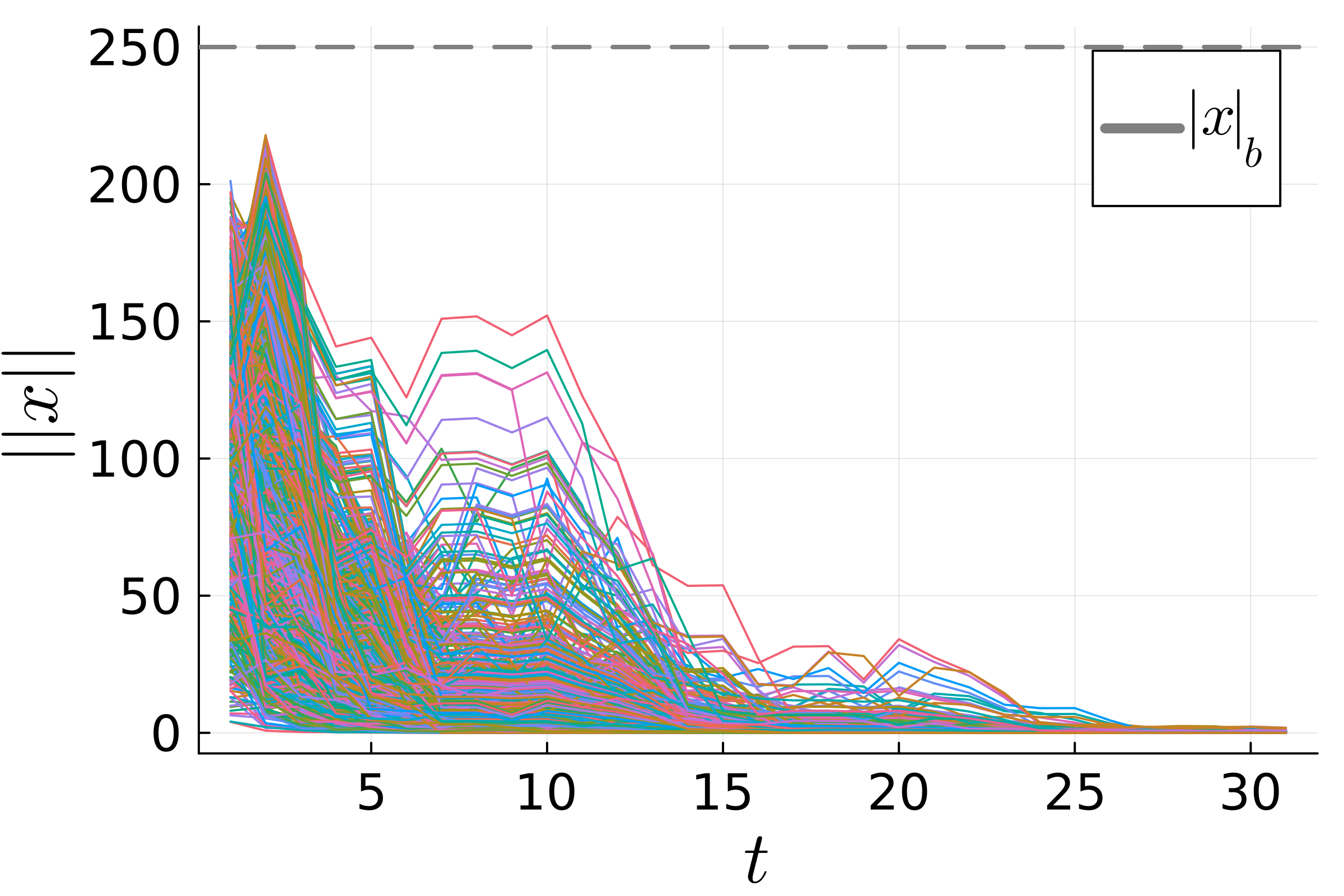}
         \caption{State norm}
         \label{fig:rel_norm_xhist}
     \end{subfigure}
     \hfill
     \begin{subfigure}{0.24\textwidth}
         \centering
         \includegraphics[width=\textwidth]{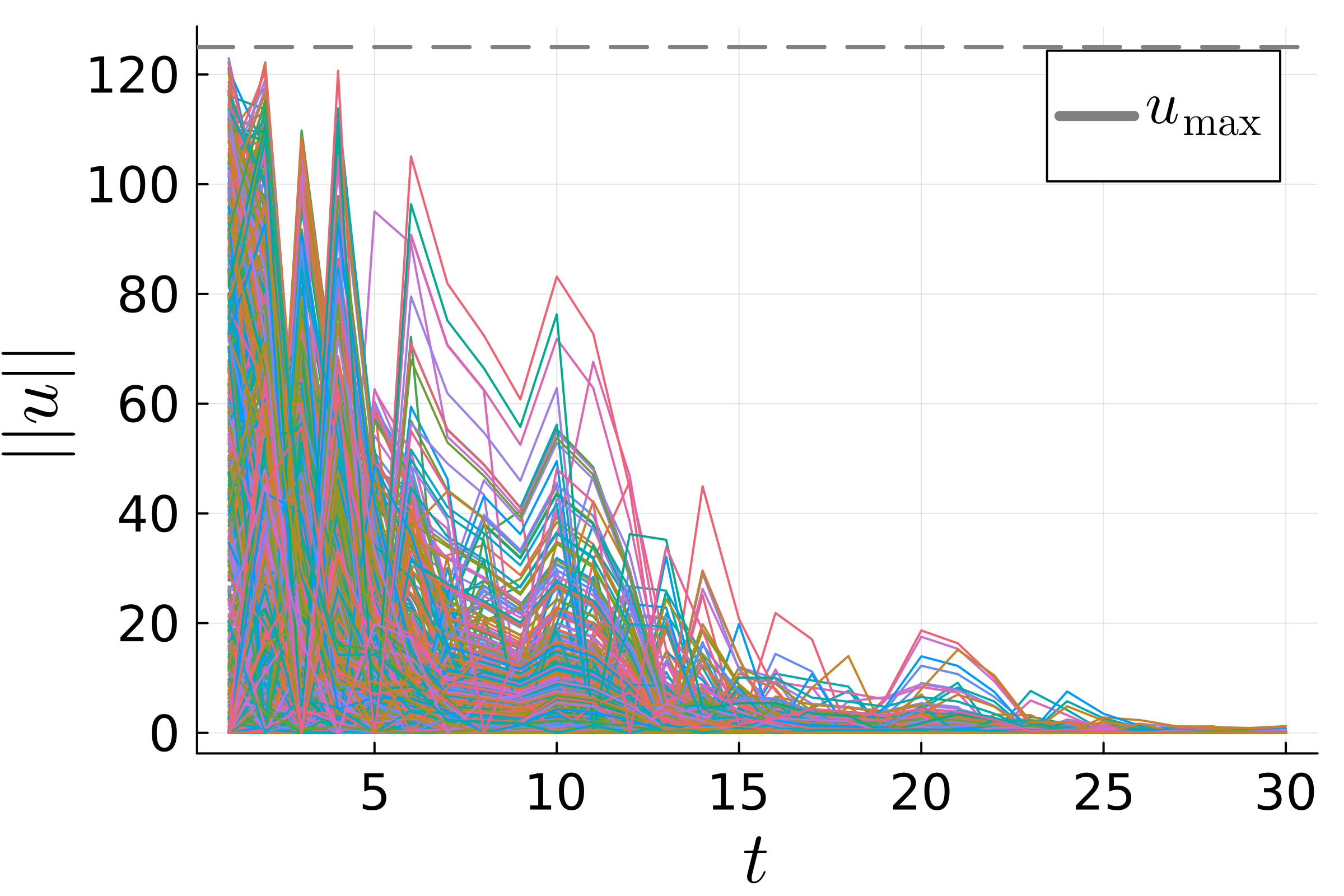}
         \caption{Control history}
         \label{fig:rel_uc_xhist}
     \end{subfigure}
     \hfill
     \caption{Problem 2 Monte Carlo simulations}
\end{figure}
\section{Conclusion}\label{conc_sec}
In this paper, we have considered robust controller synthesis problems for LTVPM. We propose a stability criterion for LTVPM in \Cref{thm4} and then relax it in \Cref{thm_rel} by leveraging periodicity of the system. In \Cref{sec_framework}, we utilize the stability criteria to formulate LMI-based controller synthesis frameworks that allow for state and control constraints with probability one satisfaction. Numerical results for both frameworks are presented.

\bibliographystyle{ieeetr}
\bibliography{references}

\end{document}